\newtheorem{thm}{Theorem}[section]
\newtheorem{cor}[thm]{Corollary}
\newtheorem{lem}[thm]{Lemma}
\theoremstyle{definition}
\newtheorem{rem}[thm]{Remark}
\numberwithin{equation}{section}
\def\eqdefa{\buildrel\hbox{\footnotesize def}\over =}
\newcommand{\R}{\mathbb R}
\newcommand\E{\mathcal{E}}
\newcommand\J{\mathscr J}
\newcommand\B{\mathscr{B}}
\newcommand\N{\mathbb{N}}
\newcommand{\dx}{\,{\rm d}x}
\begin{document}
\thispagestyle{empty}

\vspace{1 true cm} {
\title[Regularity for 3D Navier--Stokes equations]
 {Prodi--Serrin condition for 3D Navier--Stokes equations via one directional derivative of velocity}%
\author[H.Chen]{Hui Chen}%
\address[H. Chen]
 {School of Science, Zhejiang University of Science and Technology, Hangzhou, 310023, People's Republic of China }
\email{chenhui@zust.edu.cn}
\author[W. Le]{Wenjun Le}%
\address[W. Le]
 {School of Science, Zhejiang University of Science and Technology, Hangzhou, 310023, People's Republic of China}
\email{le$_-$wenjun@163.com}
\author[C. Qian]{Chenyin Qian}
\address[C. Qian]%
{Department of Mathematics, Zhejiang  Normal University Jinhua,
            321004, China}
\email{qcyjcsx@163.com }

%

\begin{abstract}
In this paper, we consider the conditional regularity of weak solution to the 3D Navier--Stokes equations. More precisely, we prove that if  one directional derivative of velocity, say $\partial_3\bm u,$ satisfies
  $\partial_3\bm u \in L^{p_0,1}(0,T; L^{q_0}(\R^3))$ with  $\frac{2}{p_{0}}+\frac{3}{q_{0}}=2$ and $\frac{3}{2}<q_0< +\infty,$ then the  weak solution  is regular on $(0,T].$ The proof is based on the new local energy estimates introduced by Chae-Wolf (arXiv:1911.02699) and Wang-Wu-Zhang (arXiv:2005.11906).
\end{abstract}

\maketitle


\noindent {{\sl Key words:} Navier--Stokes equations; Regularity of weak solutions; Serrin-Prodi condition}

\vskip 0.2cm

\noindent {\sl AMS Subject Classification (2000):} 35Q35; 35Q30; 76D03

\section{Introduction}
We consider the Cauchy problem for incompressible Navier--Stokes equations in $\R^3\times(0,\infty)$.
\begin{equation}\label{NS}
\left\{
\begin{array}{ll}
\vspace{3pt}
&\partial_{t}\bm{u}+(\bm{u}\cdot\nabla) \bm{u}-\Delta  \bm{u}+\nabla \pi=0,\\
\vspace{3pt}
&\nabla\cdot  \bm{u}=0~,\\
&u|_{t=0}=\bm{u}_{0}~,
\end{array}
\right.
\end{equation}
where  $\bm{u}=(u_{1},u_{2},u_{3})$ and $ \pi$ stand for  the
velocity field and a scalar pressure of the viscous incompressible fluid, respectively.

For every $\bm{u}_0\in L^{2}(\R^{3})$ with $\mathrm{div}~\bm{u}_{0}=0$ in the sense of distribution, a global weak solution $\bm{u}$ to the Navier--Stokes equations \eqref{NS}, which  satisfies the energy inequality
\begin{equation}\label{EN}
\|\bm{u}(\cdot,t)\|_{L^{2}(\R^3)}^{2}+2\int_{0}^{t}\|\nabla
\bm{u}(\cdot,s)\|_{L^{2}(\R^3)}^{2}\text{d}s  \leq\|\bm{u}_{0}\|_{L^{2}(\R^3)}^{2},\quad \text{ for all } \ t>0,
\end{equation}
was constructed by Leray  \cite{Leray1934} and Hopf \cite{Hopf1951}.  However, the uniqueness and regularity of such weak solution is still one of the most challenging open problems in the field of mathematical fluid mechanics.

One essential work is usually referred as Prodi--Serrin (P--S) conditions (see \cite{Escauriaza2003,Prodi1959,Serrin1962,Takahashi1990} and the references therein.), i.e. if the weak solution $\bm{u}$  satisfies
\begin{equation} \bm u\in L^{p}(0,T; L^{q}(\R^{3})),\ \
\frac{2}{p}+\frac{3}{q}=1,\ 3\leq q\leq \infty,
\end{equation} then the weak solution is regular in $(0,T]$. Along with the pioneering works of Prodi and Serrin,
Beir$\tilde{\mbox{a}}$o da Veiga \cite{BeiraoDaVeiga1995} established  regularity criteria on the gradient of the velocity field, i.e.
$$
\nabla \bm u\in L^{p}(0,T; L^{q}(\R^{3})),\ \
\frac{2}{p}+\frac{3}{q}=2,\ \frac{3}{2}\leq q\leq \infty.
$$
Later on, many efforts have been made to weakening the above criteria by imposing constraints only on partial components or directional derivatives of velocity field.

There are several notable results \cite{Chemin2016,Chemin2017,Han2019} based on one component of the velocity. For instance, B. Han etc. \cite{Han2019} proved that if $u_3 \in L^{p}(0,T;
\dot{H}^{1/2+2/p}(\R^{3}))$ with $2\leq p<+\infty$, the solution $\bm{u}$ is regular in $(0,T]$.
Very recently, D. Chae and J. Wolf \cite{Chae2019} made an important progress and obtained the regularity of solution to \eqref{NS} under the condition
\begin{align}\label{1.8}
u_{3} \in L^{p}\left(0, T ; L^{q}\left(\R^{3}\right)\right), \quad \frac{2}{p}+\frac{3}{q}<1, \quad 3<q\leq\infty.
\end{align}
W. Wang, D. Wu and Z. Zhang \cite{Wang2020} improved to
\begin{align}\label{1.8a}
u_{3} \in L^{p,1}\left(0, T; L^{q}\left(\R^{3}\right)\right), \quad \frac{2}{p}+\frac{3}{q}=1, \quad 3<q<\infty.
\end{align}
Throughout this paper, $L^{p,1}$ denotes the Lorentz space with respect to the time variable.

For the regularity criteria only involving one directional derivative of velocity , I. Kukavica and M. Zaine \cite{Kukavica2007} get the result
$$
\partial_3\bm u\in L^{p}(0,T; L^q(\R^3)), \frac{2}{p}+\frac{3}{q}=2,\quad \frac{9}{4}\leq q\leq3.
$$
There are also many efforts to extend the range of $q$ for $\partial_3\bm u$, such as \cite{Cao2010, Namlyeyeva2020,Kukavica2007,Zhang2018}. In particular, the first author of this paper and D. Fang and T. Zhang \cite{Chen2020} proved that $\bm{u}$ is regular in $(0,T]$, if
\begin{align*}
\partial_{3} \bm{u} \in L^{p}(0,T;L^{q}(\R^3)),~\frac{2}{p}+\frac{3}{q}=2,~\frac{3}{2} < q\leq 6.
\end{align*}

Along this line of research, we obtain  scaling invariant Prodi-Serrin criteria for $\partial_3 u$  with  optimal range $\frac{3}{2} < q< \infty$. More precisely, we prove the following theorem:
\begin{thm}\label{thm1}
{\sl Let  $\bm u_{0}\in  L^2(\R^3)\cap L^3(\R^3)$  and $(\bm u,\pi)$ be a Leray-Hopf weak solution to the 3D Navier--Stokes equations
\eqref{NS}. If $\bm{u}$ satisfies
	\begin{align}\label{1.9}
	\partial_{3} \bm{u} \in L^{p_{0},1}(0,T;L^{q_{0}}(\R^3)),~\frac{2}{p_{0}}+\frac{3}{q_{0}}=2,~\frac{3}{2} < q_{0}<+ \infty,
	\end{align}
	or
	\begin{align}
	\partial_{3} \bm{u} \in L^{p_{0},1}(0,T;L^{\infty}(\R^3)),~p_{0}>1,
	\end{align}
then $\bm u$ is regular in $\R^3\times(0,T]$.}
\end{thm}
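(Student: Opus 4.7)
The plan is a contradiction argument: assume that $T$ is the first singular time for the Leray--Hopf weak solution $\bm u$ and fix a potential singular point $(x_0,T)$. I would show that the hypothesis \eqref{1.9} forces a scale-invariant local quantity to vanish as one zooms in at $(x_0,T)$, which contradicts a Caffarelli-Kohn-Nirenberg type $\varepsilon$-regularity criterion. The first step is to localize \eqref{NS} on a parabolic cylinder $Q_r(x_0,T)=B_r(x_0)\times(T-r^2,T)$ and, following the local energy strategy of Chae-Wolf and Wang-Wu-Zhang, to derive an inequality of the form
\begin{equation*}
\sup_{T-r^2<t<T}\int_{B_r}|\bm u|^2\dx
+\int_{T-r^2}^T\!\!\int_{B_r}|\nabla\bm u|^2\dx\,{\rm d}t
\ \lesssim\ I_{\text{conv}}+I_{\text{press}},
\end{equation*}
where the convection and pressure contributions $I_{\text{conv}}$ and $I_{\text{press}}$ must be dominated by $\partial_3\bm u$ together with the energy norms, with all cut-off errors carefully tracked.

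The core analytic ingredient is an anisotropic interpolation tuned to the critical pair $(p_0,q_0)$ with $\frac{2}{p_0}+\frac{3}{q_0}=2$. Exploiting the incompressibility relation $\partial_3 u_3=-\partial_1 u_1-\partial_2 u_2$, a horizontal derivative can be traded against $\partial_3$; combined with a one-dimensional Gagliardo-Nirenberg inequality in $x_3$ and a two-dimensional one in $(x_1,x_2)$, this produces bounds of the schematic form
\begin{equation*}
\int |\bm u|^{3}\dx
\ \lesssim\ \|\bm u\|_{L^2}^{\alpha}\,\|\nabla\bm u\|_{L^2}^{\beta}\,\|\partial_3\bm u\|_{L^{q_0}}^{\gamma},
\end{equation*}
with exponents dictated by scaling so that integration in $t$ lands exactly on $L^{p_0}$. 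The pressure, satisfying $-\Delta\pi=\partial_i\partial_j(u_iu_j)$, is split into a near-field Calder\'on-Zygmund piece and a far-field harmonic piece; in the near field, two of the $u_iu_j$ factors can be moved onto $\partial_3$ via integration by parts so that only $\partial_3\bm u$ appears on the right-hand side.

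The third step is to promote this subcritical estimate to the Lorentz endpoint $L^{p_0,1}$. I would employ the level-set decomposition of Wang-Wu-Zhang \cite{Wang2020}, writing
\begin{equation*}
\|\partial_3\bm u(\cdot,t)\|_{L^{q_0}}
\ =\ \sum_{k\in\mathbb Z}\lambda_k\chi_{E_k}(t),
\qquad
\sum_k\lambda_k|E_k|^{1/p_0}\lesssim \|\partial_3\bm u\|_{L^{p_0,1}(0,T;L^{q_0})},
\end{equation*}
running the local energy estimate dyadically in $k$ and summing. The $L^{p_0,1}$-summability is exactly what is needed so that the resulting scale-invariant local energy on $Q_r(x_0,T)$ tends to zero as $r\to 0$, at which point the $\varepsilon$-regularity criterion rules out the singularity. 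The $L^\infty$ endpoint \eqref{1.9} (second case) can be handled as a degenerate instance of the same scheme.

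The main obstacle I anticipate lies in the second step. One must perform the anisotropic interpolation so that $\partial_3\bm u$ alone (not $\nabla\bm u$) carries the singular time behavior, while still producing the precise critical pair $\frac{2}{p_0}+\frac{3}{q_0}=2$ across the full range $q_0\in(\frac{3}{2},\infty)$; the boundary case $q_0\to\frac{3}{2}^+$ is the tightest and leaves virtually no slack, forcing one to exploit incompressibility optimally. Simultaneously controlling the nonlocal pressure inside the cylinder, where the split into near and far fields must be engineered so that only $\partial_3$-type quantities enter the bad estimates, is where the Chae-Wolf local energy machinery becomes indispensable and where most of the technical work is expected to lie.
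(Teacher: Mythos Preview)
Your high-level scaffolding---localize, run the Chae--Wolf/Wang--Wu--Zhang local energy scheme, push to the Lorentz endpoint by an atomic decomposition, and conclude via $\varepsilon$-regularity---matches the paper. The gap is in what you call the ``core analytic ingredient.'' The interpolation you propose, trading horizontal derivatives for $\partial_3$ via $\partial_3 u_3=-\partial_1 u_1-\partial_2 u_2$ together with anisotropic Gagliardo--Nirenberg to obtain $\int|\bm u|^3\lesssim\|\bm u\|_{L^2}^{\alpha}\|\nabla\bm u\|_{L^2}^{\beta}\|\partial_3\bm u\|_{L^{q_0}}^{\gamma}$, is precisely the mechanism behind the earlier results of Kukavica--Ziane, Cao, and Chen--Fang--Zhang, and it is exactly this mechanism that stalls at $q_0\le 6$ (or similar restricted ranges). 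Scaling allows the exponents formally, but the anisotropic Sobolev inequalities needed to realize them fail outside that window; the paper's whole point is to bypass this obstruction.

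The paper does \emph{not} use incompressibility in the critical step. Instead, inside the Chae--Wolf cylinders $U_k(R)$ it subtracts the $x_3$-average $\overline{(u_3)}_k(x')=\tfrac{1}{2r_k}\int_{-r_k}^{r_k}u_3(x',\omega)\,{\rm d}\omega$ and splits $u_3=(u_3-\overline{(u_3)}_k)+\overline{(u_3)}_k$. The oscillatory part is controlled by a one-dimensional Poincar\'e inequality in $x_3$, yielding a factor $\|\partial_3\bm u\|_{L^{q_0}}$ directly; the mean part is $x_3$-independent, so an integration by parts in $x_3$ throws the derivative onto the remaining $\bm u\otimes\bm u$ (or onto $\pi_0$), again producing $\partial_3\bm u$. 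This device, combined with anisotropic Lebesgue energy bounds (Lemma~\ref{energy}) and a careful dyadic decomposition of the pressure $\pi_0=\sum_j\pi_{0,j}$, is what delivers the full range $\tfrac32<q_0<\infty$. Your plan would need to replace the divergence-free interpolation by this mean-subtraction/Poincar\'e argument to close. Finally, the $q_0=\infty$ case is not a degenerate instance of the scheme: the paper handles it separately by the strict embedding $L^{p_0}_t L^\infty_x\hookrightarrow L^{p_1,1}_t L^{q_1}_x$ for a subcritical pair $(p_1,q_1)$.
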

\begin{rem}
Theorem \ref{thm1} is a direct consequence of Theorem  \ref{thm2}  and Remark \ref{rem2} via a similar compactness argument in \cite{Chae2019}. Moreover, the initial data $\bm u_{0}\in  L^2(\R^3)\cap L^3(\R^3)$ implies the local-in-time regularity of weak solution, thus the weak solution is actually suitable weak solution.
\end{rem}

A particular class of weak solution to \eqref{NS} called \emph{suitable weak solution} is introduced by Caffarelli, Kohn  and Nirenberg \cite{Caffarelli1982}. We say that  $(\bm{u}, \pi)$ is a suitable weak solution of \eqref{NS} in open domain $\Omega_{T}=\Omega \times(-T, 0)$, if
\begin{enumerate}
	\item[$(1)$] $\bm{u} \in L^{\infty}\left(-T, 0 ; L^{2}(\Omega)\right) \cap L^{2}\left(-T, 0 ; H^{1}(\Omega)\right)$ and $\pi \in L^{\frac{3}{2}}\left(\Omega_{T}\right)$;
	\item[$(2)$] \eqref{NS} is satisfied in the sense of distribution;
	\item[$(3)$] the local energy inequality holds: for any nonnegative test function $\varphi \in C_{c}^{\infty}\left(\Omega_{T}\right)$  and  $ t \in (-T, 0)$,
	\begin{align}\label{local energy}
	&\int_{\Omega}|\bm{u}(x, t)|^{2} \varphi ~\text{d}x+2 \int_{-T}^{t} \int_{\Omega}|\nabla \bm{u}|^{2} \varphi ~\text{d}x\text{d}s \notag\\
	\leq& \int_{-T}^{t} \int_{\Omega}|\bm{u}|^{2}\left(\partial_{s} \varphi+\Delta \varphi\right)+\bm{u} \cdot \nabla \varphi\left(|\bm{u}|^{2}+2 \pi\right) ~\textrm{d}x\textrm{d}s.
	\end{align}
\end{enumerate}

The important progress in \cite{Caffarelli1982} is that one-dimensional Hausdorff
measure of the possible space-time singular points set for the suitable weak solution $(\bm{u},\pi)$ is zero.
A simple proof is also given by F. Lin in \cite{Lin1998}.
\begin{thm}\label{thm2}
{\sl Let $(\bm{u}, \pi)$ be a suitable weak solution of \eqref{NS} in $\R^{3} \times(-1,0) .$ If $\bm{u}$ satisfies
\begin{equation}\label{R1}
	\partial_{3} \bm{u} \in L^{p_{0},1}\left(-1,0;L^{q_{0}}(B(2))\right),~\frac{2}{p_{0}}+\frac{3}{q_{0}}=2,~\frac{3}{2} < q_{0}<+ \infty,
\end{equation}
then there exists $ n_{0}\geq 1$, such that
\begin{align*}
r^{-2}\|\bm{u}\|_{L^{3}\left(B(r) \times\left(-r^{2}, 0\right)\right)}^{3} \leq C
\end{align*}
for any $0<r\leq2^{-n_{0}}$. Here $B(r)$ is the ball in $\R^3$ with center at the origin and radius $r$.}
\end{thm}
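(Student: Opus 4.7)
The plan is to prove Theorem \ref{thm2} by running an iteration on the dyadic scale-invariant Caffarelli--Kohn--Nirenberg quantity
\[ \mathcal{E}_n \ :=\ r_n^{-2}\,\|\bm{u}\|_{L^3(Q_n)}^{3}, \qquad Q_n:=B(r_n)\times(-r_n^2,0),\ \ r_n:=2^{-n}, \]
and showing that $\mathcal{E}_n$ stays uniformly bounded for $n\ge n_0$, which yields the claimed estimate at every $0<r\le 2^{-n_0}$ by comparing $r$ with the nearest $r_n$. Following the template of Chae--Wolf and Wang--Wu--Zhang, I would couple $\mathcal{E}_n$ with the local energy quantity
\[ A_n+B_n \ :=\ \sup_{-r_n^2<t<0}\|\bm{u}(\cdot,t)\|^2_{L^2(B(r_n))}+2\|\nabla \bm{u}\|^2_{L^2(Q_n)},\]
which I would control by inserting a smooth cut-off $\varphi_n$ supported in $Q_{n-1}$ and equal to $1$ on $Q_n$ into the local energy inequality \eqref{local energy}. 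A Gagliardo--Nirenberg interpolation then recovers $\mathcal{E}_n$ from $A_n+B_n$ together with a controlled contribution of $\mathcal{E}_{n-1}$.

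The heart of the argument lies in estimating the cubic convection flux $\iint_{Q_{n-1}}|\bm{u}|^2(\bm{u}\cdot\nabla\varphi_n)$ and the pressure flux $\iint_{Q_{n-1}} 2\pi\,\bm{u}\cdot\nabla\varphi_n$ produced on the right-hand side of \eqref{local energy}. For the cubic term I would exploit an anisotropic Sobolev embedding of the form
\[ \|f\|_{L^r(B(r_{n-1}))} \ \lesssim\ \|f\|^{1-\alpha}_{L^{\bar r}}\,\|\partial_3 f\|^{\alpha}_{L^{q_0}},\]
with exponents tuned so that a H\"older in time pairs the resulting $\partial_3\bm{u}$ factor against $L^{p_0,1}$ through the sharp Lorentz-space H\"older inequality; this is exactly where the stronger index $1$ in $L^{p_0,1}$ imported from Wang--Wu--Zhang is needed. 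For the pressure I would use the Chae--Wolf decomposition $\pi=\pi_n^{(1)}+\pi_n^{(2)}$ on $B(r_{n-1})$, with $\pi_n^{(2)}$ solving $-\Delta \pi_n^{(2)}=\partial_i\partial_j(u_iu_j)\chi_{B(r_{n-1})}$ (treated by Calder\'on--Zygmund together with the same anisotropic interpolation), and $\pi_n^{(1)}$ harmonic, whose interior estimates supply a geometric factor $(r_n/r_{n-1})^{\gamma}$ strictly smaller than $1$ upon iteration.

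Combining these estimates produces a recursion of the schematic shape
\[ \mathcal{E}_n \ \le\ C\bigl(\theta+\delta_{n-1}\bigr)\,\mathcal{E}_{n-1}+C_0, \qquad \theta\in(0,1),\]
where $\delta_{n-1}:=\|\partial_3\bm{u}\|_{L^{p_0,1}(-r_{n-1}^2,0;L^{q_0}(B(2)))}^{\mu}\to 0$ as $n\to\infty$ by the absolute continuity of the Lorentz norm on the shrinking time window $(-r_{n-1}^2,0)$. Choosing $n_0$ so large that $C(\theta+\delta_{n_0-1})<1$ then closes the iteration and gives $\mathcal{E}_n\le C$ uniformly for $n\ge n_0$.

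The main obstacle I anticipate is the careful selection of interpolation exponents so that the entire range $\tfrac{3}{2}<q_0<+\infty$ is covered by a single scheme: as $q_0\downarrow\tfrac{3}{2}$ both the anisotropic Gagliardo--Nirenberg inequality and the Calder\'on--Zygmund estimate become borderline, and the scaling relation $\tfrac{2}{p_0}+\tfrac{3}{q_0}=2$ must be used delicately at each step to match the power of $r_n$ exactly. A secondary but essential technical point is to invoke the divergence-free condition $\partial_1 u_1+\partial_2 u_2+\partial_3 u_3=0$ at the right moment so that information on the vertical derivative alone generates sufficient integrability on all three components to close the pressure estimate.
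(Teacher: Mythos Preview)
Your two-scale recursion on isotropic parabolic cylinders cannot yield the inequality $\mathcal E_n\le C(\theta+\delta_{n-1})\mathcal E_{n-1}+C_0$ with $C(\theta+\delta_{n-1})<1$. The obstruction appears already in the \emph{linear} term $\iint |\bm u|^2(\partial_s\varphi_n+\Delta\varphi_n)$: with a smooth cutoff between $Q_{n-1}$ and $Q_n$ one has $|\partial_s\varphi_n|+|\Delta\varphi_n|\sim r_n^{-2}$, so this term alone contributes a fixed $O(1)$ multiple of the scale-invariant energy at level $n-1$, and no interpolation involving $\partial_3\bm u$ can reduce it since $\partial_3\bm u$ simply does not appear there. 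The same criticality afflicts the \emph{horizontal} part of the cubic and Calder\'on--Zygmund pressure fluxes, which carry $|\nabla'\varphi_n|\sim r_n^{-1}$ and again do not see $\partial_3\bm u$; the geometric factor $\theta<1$ you expect from the harmonic piece of the pressure cannot compensate for these $O(1)$ contributions from the other terms.

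The paper's argument is structurally different. It works on \emph{anisotropic} cylinders $U_n(R)=B'(R)\times(-r_n,r_n)$ with the horizontal radius $R\in[\tfrac12,1]$ fixed, and takes as test function $\Phi_n\eta\psi$ where $\Phi_n(x_3,t)$ is the one-dimensional backward heat kernel; the identity $\partial_t\Phi_n+\partial_3^2\Phi_n=0$ makes the linear term collapse to a harmless constant (Lemma~\ref{lem1}). The vertical fluxes $\iint|\bm u|^2u_3\,\partial_3(\Phi_n\eta)\psi$ and their pressure counterparts are converted into expressions containing $\partial_3\bm u$ by subtracting the $x_3$-average $\overline{(u_3)}_k$ and applying the one-dimensional Poincar\'e inequality (Lemma~\ref{A3}); this is what produces the summable coefficients $\B_i$, and the Lorentz hypothesis enters through $\sum_i\B_i\lesssim\|\partial_3\bm u\|_{L^{p_0,1}_tL^{q_0}_x}$ (Lemma~\ref{A5}), a statement strictly stronger than the absolute continuity you invoke. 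The horizontal fluxes carry only the factor $(R-\rho)^{-1}$, which is removed at the end by a Giaquinta iteration in the radii $\rho<R$. The outcome is a \emph{multi-scale} inequality $r_n^{-1}E_n(\rho)\le C_0\sum_{i\le n}(\B_i+r_i^{1/6})\,r_i^{-1}E_i(R)+C_0(R-\rho)^{-4}$, closed by choosing $n_0$ so that the tail of the sum is at most $\tfrac12$ and then iterating in $R$. Your outline is missing each of these devices: the singular anisotropic test function, the mean-value/Poincar\'e conversion, the layered pressure decomposition of Lemma~\ref{lem3}, and the double iteration in $n$ and in $R$.
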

\begin{rem}\label{rem2}
Our method fails in the case $q_{0}=+\infty$. However, replacing \eqref{R1} with the subcritical regularity criteria
	\begin{equation}\label{subc}
	\partial_{3} \bm{u} \in L^{p_{0}}\left(-1,0;L^{\infty}(B(2))\right),p_{0}>1,
	\end{equation}
Theorem \ref{thm2} still holds true.
Actually, we can pick $1<p_{1}<p_{0}$ and $\frac{3}{2}<q_{1}<+\infty$ such that $\frac{2}{p_{1}}+\frac{3}{q_{1}}=2$. Therefore, we can prove it directly by the embedding inequality
\begin{equation*}
\left\|\partial_{3}\bm{u}\right\|_{L^{p_{1},1}\left(-1,0;L^{q_{1}}(B(2))\right)}\leq C\left\|\partial_{3}\bm{u}\right\|_{L^{p_{0}}\left(-1,0;L^{\infty}(B(2))\right)}.
\end{equation*}
\end{rem}

The proof of Theorem \ref{thm2} follows from the  ideal of local energy estimates introduced in \cite{Caffarelli1982,Chae2019,Wang2020}. The non-trivial part is to establish the necessary \textit{a priori} estimates  of the quantity $J=\int_{-1}^{t} \int_{U_{0}(R)} \pi_{0} u_{3} \cdot \partial_{3}\left(\Phi_{n} \eta \psi\right)\dx  \text{d}s$   involving the "non-harmonic" part of the pressure $\pi_{0}$. The difficulty will be overcome by introducing
the mean value function $\overline{\left(u_{3}\right)}_{k}$ defined in \eqref{overline}. More precisely, we adopt a new decomposition of the quantity $J$ in \eqref{J} and \eqref{J_{1}'}. We should point out that the energy inequality in Lemma \ref{energy}  and  Poincar\'{e}'s inequality in Lemma \ref{A3} are crucial to deduce the estimates.

Our paper is organized as follows: in Section 2, we recall some notations and preliminary results ; we establish the \textit{a priori} estimates involving the convection term in Section 3.1 and  the pressure term in Section 3.2; finally, we will complete the proof in Section \ref{section3.3}.

\section{Notations and Preliminary}
In this preparation section, we recall some usual notations and  preliminary results.

For two comparable quantities, the inequality $X\lesssim Y$ stands for $X\leq C Y$ for some positive constant $C$. The dependence of the constant $C$ on other parameters or constants are usually clear from the context, and we will often suppress this dependence.

 \par
 We  also shall use the same  notation as that in  Chae-Wolf \cite{Chae2019}. For $x=(x_{1},x_{2},x_{3}) \in \R^{3}$, we denote $x^{\prime}=(x_{1},x_{2})$  the horizontal variable.

 For $0<R<+\infty$, we denote $B(R)$ the ball in $\R^3$ with center at origin and radius $R$, and $B^{\prime}(R)\subset \R^2$ the ball in the horizontal plane  with center at origin and radius $R$.

We set the spatial cylinder
\begin{equation*}
  U_n(R) \eqdefa B'(R)\times(-r_n, r_n),
\end{equation*}
the  parabolic cylinder
\begin{equation*}
  Q_n(R) \eqdefa U_n(R)\times(-r_n^2, 0),
\end{equation*}
and
\begin{equation*}
  A_n(R) \eqdefa Q_n(R) \backslash Q_{n+1}(R),
\end{equation*}
where $r_{n}=2^{-n},\ n\in\N$.

We take $\Phi_{n}$ the fundamental solution of the backward heat equation
\begin{equation*}
\partial_{t}\Phi_{n}+\partial_{3}^2 \Phi_{n}=0,
\end{equation*}
with singularity at $(0,r_{n}^2)$. More explicitly, we consider $\Phi_n$ given by
$$
\Phi_n(x,t)\eqdefa \frac{1}{\sqrt{4\pi(-t+r_n^2)}}e^{-\frac{x_3^2}{4(-t+r_n^2)}},\quad (x,t)\in\R^3\times(-\infty, 0).
$$

There exist absolute  constants $c_1, c_2>0$ such that for  $j=1,\ldots,n-1$, it holds
\begin{equation}\label{Phi_{n}}
\begin{split}
& \Phi_n\leq c_2 r_j^{-1},\qquad \qquad  |\partial_3\Phi_n|\leq c_2 r_j^{-2},\qquad\quad\text{in}\quad A_j(R),\\
c_1 r_n^{-1}&\leq \Phi_n\leq c_2 r_n^{-1},\quad c_1 r_n^{-2}\leq |\partial_3\Phi_n|\leq c_2 r_n^{-2},\quad\text{in}\quad Q_n(R).
\end{split}
\end{equation}
We denote the energy
\begin{align*}
  E_n(R) &\eqdefa \sup_{t\in (-r_n^2, 0)}\int_{U_n(R)}|\bm u(t)|^2\text{d}x+\int_{-r^2_n}^{0}\int_{U_n(R)}|\nabla \bm u|^2 \dx \text{d}s,\\
  \E\eqdefa & \sup_{t\in (-1, 0)}\int_{\R^3}|\bm u(t)|^2
  \dx+\int_{-1}^{0}\int_{\R^3}|\nabla \bm u|^2 \dx \text{d}s.
\end{align*}
 Next, we denote $L^{q_{v}}_{v}L^{q_{h}}_{h}\left(B^{\prime}(R)\times(-r,r)\right)$ the anisotropic Lebesgue space equipped with the norm
\begin{equation}\label{anisotropic}
\|h(x)\|_{L^{q_{v}}_{v}L^{q_{h}}_{h}\left(B^{\prime}(R)\times(-r,r)\right)}
=\left(\int_{-r}^{r}\|h(\cdot,x_{3})
\|_{L^{q_{h}}\left(B^{\prime}(R)\right)}^{q_{v}}\dx  _{3}\right)^{\frac{1}{q_{v}}}.
\end{equation}
The following lemma ensures the anisotropic Lebesgue space obeys the energy estimates in \cite[Lemma 3.1]{Chae2019}.
\begin{lem}\label{energy}
{\sl	Let $ R\geq \frac{1}{2}$. For $\forall~ 2\leq p \leq \infty$, $2\leq q_{h},q_{v}<+\infty$, $\frac{2}{p}+\frac{2}{q_{h}}+\frac{1}{q_{v}}=\frac{3}{2}$, we have
	\begin{equation}
	\|\bm{u}\|_{L^{p}\left(-r_{n}^{2}, 0 ;L^{q_{v}}_{v}L^{q_{h}}_{h}\left(U_{n}(R)\right)\right)}^{2} \leq C E_{n}(R).
	\end{equation}}
\end{lem}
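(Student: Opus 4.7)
My plan is to reduce the claim to a pointwise-in-time anisotropic Gagliardo--Nirenberg interpolation and then integrate in time against the two ingredients of $E_n(R)$. The hypothesis $\frac{2}{p}+\frac{2}{q_h}+\frac{1}{q_v}=\frac{3}{2}$ is equivalent to $\frac{2}{q_h}+\frac{1}{q_v}=\frac{3}{2}-\frac{2}{p}$, which by scaling is exactly what one gets from interpolating the mixed norm $L^{q_v}_vL^{q_h}_h$ between $L^2$ (scaling exponent $3/2$) and $\dot H^1$ (exponent $1/2$), with weights $1-\frac{2}{p}$ and $\frac{2}{p}$ respectively. So the target is the spatial bound
\begin{equation*}
\|\bm u(t)\|_{L^{q_v}_vL^{q_h}_h(U_n(R))}\lesssim \|\bm u(t)\|_{L^2(U_n(R))}^{1-\frac{2}{p}}\bigl(\|\bm u(t)\|_{L^2(U_n(R))}+\|\nabla \bm u(t)\|_{L^2(U_n(R))}\bigr)^{\frac{2}{p}}.
\end{equation*}

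To establish this interpolation I would proceed slice-by-slice: apply the two-dimensional Gagliardo--Nirenberg inequality on each horizontal disc $B'(R)$, yielding
\begin{equation*}
\|\bm u(\cdot,x_3,t)\|_{L^{q_h}(B'(R))}\lesssim \|\bm u(\cdot,x_3,t)\|_{L^2}^{\frac{2}{q_h}}\bigl(\|\bm u(\cdot,x_3,t)\|_{L^2}+\|\nabla_h\bm u(\cdot,x_3,t)\|_{L^2}\bigr)^{1-\frac{2}{q_h}},
\end{equation*}
then raise to the $q_v$-th power, integrate in $x_3\in(-r_n,r_n)$, and interpolate the resulting $L^{q_v}$ norm in $x_3$ using a one-dimensional Gagliardo--Nirenberg applied to the profile $x_3\mapsto\|\bm u(\cdot,x_3,t)\|_{L^2_h}$ (whose vertical derivative is controlled, via Cauchy--Schwarz, by $\|\partial_3\bm u\|_{L^2}$). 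An equally clean route is to Sobolev-extend $\bm u(\cdot,t)$ from the cylinder $U_n(R)$ to $\R^3$ with uniformly bounded constants (admissible since $R\geq 1/2$ and $r_n\leq 1$ keep the geometry non-degenerate), and apply the scale-invariant anisotropic Gagliardo--Nirenberg on $\R^3$ directly; the low-order $\|\bm u\|_{L^2}$ term inside the brackets accounts for the non-homogeneous part of the inequality on the bounded domain.

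With the interpolation in hand, I would raise both sides to the $p$-th power, pull the supremum of $\|\bm u(t)\|_{L^2}^{p-2}$ out, and estimate
\begin{equation*}
\int_{-r_n^2}^{0}\|\bm u(t)\|_{L^{q_v}_vL^{q_h}_h(U_n(R))}^{p}\,\mathrm dt\lesssim \Bigl(\sup_{t}\|\bm u(t)\|_{L^2(U_n(R))}^{2}\Bigr)^{\frac{p-2}{2}}\int_{-r_n^2}^{0}\bigl(\|\bm u\|_{L^2}^{2}+\|\nabla\bm u\|_{L^2}^{2}\bigr)\,\mathrm dt.
\end{equation*}
Because $r_n\leq 1$, the harmless lower-order $\|\bm u\|_{L^2}^{2}$ under the time integral is absorbed into $E_n(R)$, so the right-hand side is $\lesssim E_n(R)^{p/2}$; taking the $2/p$ power yields the claim for $2\leq p<\infty$. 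The endpoint $p=\infty$ is degenerate: the scaling identity together with $q_h,q_v\geq 2$ forces $q_h=q_v=2$, and the inequality collapses to the $L^\infty_tL^2_x$-part of the definition of $E_n(R)$. The main obstacle, and the point to handle carefully, is the anisotropic Gagliardo--Nirenberg on the bounded cylinder with precisely the exponents $1-\frac{2}{p}$ and $\frac{2}{p}$ on the $L^2$ and $H^1$ norms, since only this matching allows the supremum in $t$ to pair with the $L^2_tH^1_x$ integrability of $E_n(R)$ without leaving an uncontrolled residue—which is why the hypothesis on $p,q_h,q_v$ is sharp.
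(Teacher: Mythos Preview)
Your overall strategy is the same as the paper's: prove a pointwise-in-time anisotropic Gagliardo--Nirenberg bound on $U_n(R)$ via extension to $\R^3$, then integrate in time. The gap is in the spatial step. You assert that the Sobolev extension from $U_n(R)$ to $\R^3$ has constants uniform in $n$ because ``$R\geq\frac12$ and $r_n\leq 1$ keep the geometry non-degenerate'', but this is false: $U_n(R)=B'(R)\times(-r_n,r_n)$ is thin in the vertical direction, with $r_n=2^{-n}\to 0$, so any extension must pay a factor of $r_n^{-1}$ on the gradient. Concretely, your stated inequality
\[
\|\bm u\|_{L^{q_v}_vL^{q_h}_h(U_n(R))}\lesssim \|\bm u\|_{L^2(U_n(R))}^{1-\frac{2}{p}}\bigl(\|\bm u\|_{L^2}+\|\nabla\bm u\|_{L^2}\bigr)^{\frac{2}{p}}
\]
with a constant independent of $n$ already fails on the constant function $\bm u\equiv 1$: the left side scales like $r_n^{1/q_v}$ while the right side scales like $r_n^{1/2}$, and for any $q_v>2$ the ratio blows up as $n\to\infty$.

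The paper fixes exactly this point: it reflects in $x_3$, multiplies by a vertical cutoff $\zeta$ with $|\zeta'|\leq 2r_n^{-1}$, rescales only horizontally by $R$, and then extends from a fixed domain. The price of the cutoff is the correct inequality
\[
\|\bm u\|_{L^{q_v}_vL^{q_h}_h(U_n(R))}\leq C\,r_n^{-\frac{2}{p}}\|\bm u\|_{L^2(U_n(R))}+C\,\|\bm u\|_{L^2(U_n(R))}^{1-\frac{2}{p}}\|\nabla\bm u\|_{L^2(U_n(R))}^{\frac{2}{p}},
\]
and the extra $r_n^{-2/p}$ is precisely cancelled in the time integration, since $\int_{-r_n^2}^{0}\bigl(r_n^{-2/p}\|\bm u\|_{L^2}\bigr)^p\,\mathrm dt\leq r_n^{2}\cdot r_n^{-2}\sup_t\|\bm u\|_{L^2}^{p}\leq E_n(R)^{p/2}$. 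So your plan survives, but only once you track this $r_n$-dependence rather than dismissing it.
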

\begin{proof}
For $0<r\leq 1$, set $U=B^{\prime}(R)\times(-r,r)$, $U_{1}=B^{\prime}(R)\times(-2r,2r)$, $U_{2}=B^{\prime}(R)\times(-4R,4R)$ and $U_{3}=B^{\prime}(1)\times(-4,4)$.

We claim  that
\begin{align}\label{2.4}
\|\bm{u}\|_{L^{q_{v}}_{v}L^{q_{h}}_{h}\left(U\right)} \leq C r^{-\frac{2}{p}}\|\bm{u}\|_{L^{2}(U)}+C\|\bm{u}\|_{L^{2}(U)}^{1-\frac{2}{p}}\|\nabla \bm{u}\|_{L^{2}(U)}^{\frac{2}{p}}.
\end{align}
In fact, given $\bm{u} \in W^{1,2}(U)$, we define the extension
\begin{align*}
\bm{v}_{1}(x):=\left\{\begin{array}{lll}
\bm{u}(x^{\prime},x_{3}) & \text { if } & x_{3} \in (-r,r), \\
\bm{u}\left(x^{\prime}, 2 r-x_{3}\right) & \text { if } &  x_{3} \in[r, 2 r), \\
\bm{u}\left(x^{\prime},-2 r-x_{3}\right) & \text { if } & x_{3} \in(-2 r,-r].
\end{array}\right.
\end{align*}
Then $\bm{v}_{1} \in W^{1,2}(U_{1})$ and it holds
\begin{equation}\label{2.5-1}
\|\nabla \bm{v}_{1}\|_{L^{2}(U_{1})} \leq 3\|\nabla \bm{u}\|_{L^{2}(U)}, \quad\|\bm{v}_{1}\|_{L^{2}(U_{1})} \leq 3\|\bm{u}\|_{L^{2}(U)}.
\end{equation}
Let $\zeta \in C_{c}^{\infty}(-2 r, 2 r)$ denote a cut off function such that $\zeta=1$ on $(-r, r)$ and $\left|\zeta^{\prime}\right| \leq 2 r^{-1} .$ Noting that by $r \leq 2 R$, it holds $U_{1} \subset U_{2}$ and
$\bm{v}_{2}=\bm{v}_{1} \zeta \in W^{1,2}\left(U_{2}\right) .$

Set $\bm{v}_{3}(x)=\bm{v}_{2}(Rx),\ x\in U_{3}$. There is an Sobolev extension $\tilde{\bm{v}}_{3}(x) \in W^{1,2}(\R^3)  $ with $\tilde{\bm{v}}_{3}(x)=\bm{v}_{3}(x),\ x\in U_{3}$ and
\begin{equation}\label{2.6-1}
\|\tilde{\bm{v}}_{3}\|_{L^{2}(\R^3)}\leq C \|\bm{v}_{3}\|_{L^{2}(U_{3})},\quad \|\nabla\tilde{\bm{v}}_{3}\|_{L^{2}(\R^3)}\leq C \|\bm{v}_{3}\|_{W^{1,2}(U_{3})}.
\end{equation}
By means of Sobolev's inequality and a simple scaling argument along with \eqref{2.5-1} and \eqref{2.6-1}, we get
\begin{align*}
\|\bm{u}\|_{L^{q_{v}}_{v}L^{q_{h}}_{h}\left(U\right)} & \leq\|\bm{v}_{2}\|_{L^{q_{v}}_{v}L^{q_{h}}_{h}\left(U_{2}\right)} \\
&\leq R^{\frac{2}{q_{h}}+\frac{1}{q_{v}}} \|\bm{v}_{3}\|_{L^{q_{v}}_{v}L^{q_{h}}_{h}\left(U_{3}\right)}\\
&\leq C R^{\frac{2}{q_{h}}+\frac{1}{q_{v}}} \|\tilde{\bm{v}}_{3}\|_{L^{q_{v}}_{v}L^{q_{h}}_{h}\left(\R^3\right)}\\
&\leq C R^{\frac{2}{q_{h}}+\frac{1}{q_{v}}} \|\tilde{\bm{v}}_{3}\|_{L^{2}\left(\R^3\right)}^{\frac{2}{q_{h}}+\frac{1}{q_{v}}-\frac{1}{2}}\|\nabla\tilde{\bm{v}}_{3}\|_{L^{2}\left(\R^3\right)}^{\frac{3}{2}-\frac{2}{q_{h}}-\frac{1}{q_{v}}}\\
&\leq C R^{\frac{2}{q_{h}}+\frac{1}{q_{v}}}\left(\|\bm{\bm{v}}_{3}\|_{L^2(U_{3})}+\|\bm{v}_{3}\|_{L^2(U_{3})}^{\frac{2}{q_{h}}+\frac{1}{q_{v}}-\frac{1}{2}}\|\nabla \bm{v}_{3}\|_{L^2(U_{3})}^{\frac{3}{2}-\frac{2}{q_{h}}-\frac{1}{q_{v}}}\right)\\
&\leq C \|\bm{v}_{2}\|_{L^2(U_{2})}+C\|\bm{v}_{2}\|_{L^2(U_{2})}^{\frac{2}{q_{h}}+\frac{1}{q_{v}}-\frac{1}{2}}\|\nabla \bm{v}_{2}\|_{L^2(U_{2})}^{\frac{3}{2}-\frac{2}{q_{h}}-\frac{1}{q_{v}}}\\
& \leq C r^{-\frac{3}{2}+\frac{2}{q_{h}}+\frac{1}{q_{v}}}\|\bm{u}\|_{L^{2}(U)}+C\|\bm{u} \|_{L^{2}(U)}^{\frac{2}{q_{h}}+\frac{1}{q_{v}}-\frac{1}{2}}\|\nabla \bm{u}\|_{L^{2}(U)}^{\frac{3}{2}-\frac{2}{q_{h}}-\frac{1}{q_{v}}},
\end{align*}
which gives rise to \eqref{2.4}.   By using of H\"{o}lder's inequality and \eqref{2.4} with $r=r_{n}$, we achieve
\begin{equation*}
\|\bm{u}\|_{L^{p}\left(-r_{n}^{2}, 0 ;L^{q_{v}}_{v}L^{q_{h}}_{h}\left(U_{n}(R)\right)\right)}^{2}  \leq C\|\bm{u}\|_{L^{\infty}\left(-r_{n}^{2}, 0 ; L^{2}(U_{n})\right)}^{2}+C\|\nabla \bm{u}\|_{L^{2}\left(-r_{n}^{2}, 0 ; L^{2}(U_{n})\right)}^{2}.
\end{equation*}
This completes the proof of this lemma.
\end{proof}

\section{Proof of the Main Results}

In this section, we apply a similar arguement in  Cafferalli-Kohn-Nirenberg \cite{Caffarelli1982}, Chae-Wolf \cite{Chae2019}, or Wang-Wu-Zhang \cite{Wang2020} to prove Theorem \ref{thm2}.

We assume that solution $\bm u$ satisfies
\begin{equation}
\partial_{3} \bm{u} \in L^{p_{0},1}\left(-1,0;L^{q_{0}}(B(2))\right),~\frac{2}{p_{0}}+\frac{3}{q_{0}}=2,~\frac{3}{2} < q_{0}<+ \infty.
\end{equation}
For fixed $p_{0},q_{0}$, we can pick $\frac{4q_{0}}{4q_{0}-5}< p_{0}^*<p_{0}$. Set
\begin{align}\label{3.2}
\B_{i}=r_{i}^{2-\frac{2}{p_{0}^*}-\frac{3}{q_{0}}}\|\partial_{3} \bm{u}\|_{L^{p_{0}^*}\left(-r_{i}^2,0;L^{q_{0}}(B(2))\right)}.
\end{align}
By Lemma \ref{A5}, we have
\begin{align}\label{B}
\sum_{i=0}^{+\infty} \B_{i}\leq C \left\|\partial_{3}\bm{u}\right\|_{L^{p_{0}, 1}\left(-1,0;L^{q_{0}}(B(2))\right) }.
\end{align}
Let $\eta\left(x_{3}, t\right) \in C_{c}^{\infty}((-1,1) \times(-1,0])$ denotes a cut-off function,  $0 \leq \eta \leq 1$, and $\eta=1$ on $\left(-\frac{1}{2}, \frac{1}{2}\right) \times\left(-\frac{1}{4}, 0\right)$.

In addition, let $\frac{1}{2} \leq \rho<R\leq1$ be arbitrarily chosen, but $|R-\rho|\leq\frac{1}{2} .$ Let $\psi=\psi\left(x^{\prime}\right) \in C^{\infty}\left(\R^{2}\right)$ with $0 \leq \psi \leq 1$ in $B^{\prime}(R)$ satisfying
\begin{equation}
\psi(x^{\prime})=\psi(|x^{\prime}|)=\left\{\begin{array}{l}
1 \text { in } B^{\prime}(\rho) \\
0 \text { in } \R^{2} \backslash B^{\prime}\left(\frac{R+\rho}{2}\right)
\end{array}\right.
\end{equation}
and
\begin{equation*}
|D \psi| \leq \frac{C}{R-\rho}, \quad\left|D^{2} \psi\right| \leq \frac{C}{(R-\rho)^{2}}.
\end{equation*}

For $j=0,1,\cdots,n$, denote $\chi_{j} =\chi_{B^{\prime}(R)}(x^{\prime})\cdot\eta(2^{j}\cdot x_{3},2^{2j}\cdot t)$, where $\chi_{B^{\prime}(R)}$ is the indicator function of the set $B^{\prime}(R)$. Let
\begin{equation}\label{phi}
\begin{array}{lll}
\phi_{j}=\left\{\begin{array}{ll}
\chi_{j}-\chi_{j+1}, & \text { if } \quad j=0, \ldots, n-1, \\
\chi_{n}, & \text { if } \quad j=n,
\end{array}\right.
\end{array}
\end{equation}
and the mean value function
\begin{equation} \label{overline}
\overline{(u_{3})}_{k}(x^{\prime})=\frac{1}{2r_{k}}
\int_{-r_{k}}^{r_{k}}u_{3}(x^{\prime},\omega)~\text{d}\omega.
\end{equation}

Taking the test function $\varphi=\Phi_n\eta\psi$ in \eqref{local energy}, it yields that
\begin{align}\label{key}
& \int_{U_{0}(R)}|\bm{u}(x, t)|^{2} \Phi_{n} \eta \psi\dx  +2\int_{-1}^{t} \int_{U_{0}(R)}|\nabla \bm{u}|^{2} \Phi_{n} \eta \psi\dx  \text{d}s   \notag\\
\leq&~  \int_{-1}^{t} \int_{U_{0}(R)}|\bm{u}|^{2}\left(\partial_{s}+\Delta\right)\left(\Phi_{n} \eta \psi\right)\dx  \text{d}s  + \int_{-1}^{t} \int_{U_{0}(R)}|\bm{u}|^{2} \bm{u} \cdot \nabla\left(\Phi_{n} \eta \psi\right)\dx  \text{d}s   \notag\\
&+2\int_{-1}^{t} \int_{U_{0}(R)} \pi \bm{u} \cdot \nabla\left(\Phi_{n} \eta \psi\right)\dx  \text{d}s  .
\end{align}
Next, we shall handle the right side of \eqref{key} term by term.
\subsection{Estimates for nonlinear terms}\label{section3.1}
\begin{lem}\label{lem1}
{\sl Let $(\bm{u}, \pi)$ be a suitable weak solution of \eqref{NS} in $\R^{3} \times(-1,0) .$ If $(\bm{u},\pi)$ satisfies the assumption of Theorem \ref{thm2}. Then we have
\begin{equation}\label{2.5}
\int_{-1}^{t} \int_{U_{0}(R)}|\bm{u}|^{2}\left(\partial_{s}+\Delta\right)\left(\Phi_{n} \eta \psi\right)\dx   \normalfont{\text{d}s}   \leq C \frac{\E}{(R-\rho)^{2}}.
\end{equation}}
\end{lem}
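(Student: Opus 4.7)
My plan is to exploit the backward heat equation $\partial_s\Phi_n+\partial_3^2\Phi_n=0$ to cancel the most singular part of $(\partial_s+\Delta)(\Phi_n\eta\psi)$. Writing $\Phi_n=\Phi_n(x_3,s)$, $\eta=\eta(x_3,s)$, $\psi=\psi(x')$, and applying the product rule,
\begin{align*}
(\partial_s+\Delta)(\Phi_n\eta\psi)
=\Phi_n\psi\,\partial_s\eta+\Phi_n\psi\,\partial_3^2\eta+2(\partial_3\Phi_n)\psi\,\partial_3\eta+\Phi_n\eta\,\Delta_{x'}\psi,
\end{align*}
since the contribution $(\partial_s\Phi_n+\partial_3^2\Phi_n)\eta\psi$ vanishes identically. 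Thus every surviving term carries a derivative of one of the cutoffs $\eta$ or $\psi$, which is the structural reason this term is ``good''.

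First I would treat the three terms involving derivatives of $\eta$. Because $\eta\equiv 1$ on $(-\tfrac12,\tfrac12)\times(-\tfrac14,0)$ and is supported in $(-1,1)\times(-1,0]$, the derivatives $\partial_s\eta$, $\partial_3\eta$, $\partial_3^2\eta$ are supported in the region $\{s\in[-1,-\tfrac14]\}\cup\{|x_3|\in[\tfrac12,1]\}$. On $\{s\leq-\tfrac14\}$ one has $-s+r_n^2\geq\tfrac14$, so $\Phi_n$ and $|\partial_3\Phi_n|$ are bounded by absolute constants. On $\{|x_3|\geq\tfrac12\}$, a direct estimate of the Gaussian $\Phi_n(x_3,s)=\tfrac{1}{\sqrt{4\pi(-s+r_n^2)}}e^{-x_3^2/(4(-s+r_n^2))}$ yields $\Phi_n\leq C/|x_3|$ and $|\partial_3\Phi_n|\leq C/|x_3|^2$ uniformly in $n$ (the exponential tail defeats the $(-s+r_n^2)^{-1/2}$ or $(-s+r_n^2)^{-3/2}$ blow-up). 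Consequently, each of these three contributions is bounded by
\begin{align*}
C\int_{-1}^{0}\int_{U_0(R)}|\bm u|^2\dx\ds\leq C\E.
\end{align*}

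Next, for the term involving $\Delta_{x'}\psi$, I would use $|\Delta_{x'}\psi|\leq C/(R-\rho)^2$ together with the pointwise bound $\Phi_n(x_3,s)\leq\Phi_n(0,s)=(4\pi(-s+r_n^2))^{-1/2}$. Integrating in $x_3$ first and then in $x'$ against $\|\bm u(s)\|_{L^2}^2\leq\E$,
\begin{align*}
\int_{-1}^{t}\int_{U_0(R)}|\bm u|^2\Phi_n\eta|\Delta_{x'}\psi|\dx\ds
&\leq\frac{C}{(R-\rho)^2}\int_{-1}^{0}\Phi_n(0,s)\|\bm u(s)\|_{L^2(U_0(R))}^2\ds\\
&\leq\frac{C\E}{(R-\rho)^2}\int_{-1}^{0}\frac{\ds}{\sqrt{-s+r_n^2}}\leq\frac{C\E}{(R-\rho)^2},
\end{align*}
since $\int_{-1}^{0}(-s+r_n^2)^{-1/2}\ds\leq 2\sqrt{1+r_n^2}\leq C$ uniformly in $n$. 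Combining with the previous bound and using $R-\rho\leq\tfrac12$ to absorb the $C\E$ from the $\eta$-derivative terms into $C\E/(R-\rho)^2$ produces \eqref{2.5}. No serious obstacle is expected here: the backward heat equation performs the only delicate cancellation, and everything else is bookkeeping around the explicit Gaussian; neither an energy refinement nor the anisotropic Lemma \ref{energy} is needed for this particular lemma.
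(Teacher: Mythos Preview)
Your proof is correct and follows essentially the same approach the paper alludes to (it omits the argument, deferring to Lemma~3.1 of \cite{Wang2020}): exploit $\partial_s\Phi_n+\partial_3^2\Phi_n=0$ to kill the singular contribution, then control the remaining terms using that derivatives of $\eta$ live away from the singularity and that $\Phi_n$ is time-integrable at the singularity. Your direct time-integration $\int_{-1}^0(-s+r_n^2)^{-1/2}\,\mathrm{d}s\leq C$ for the $\Delta_{x'}\psi$ term is a minor variant of the dyadic bound $\Phi_n\leq c_2 r_j^{-1}$ on $A_j(R)$ from \eqref{Phi_{n}}, but the two are equivalent here.
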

\begin{proof}
  The proof of this lemma is similar to Lemma 3.1 in \cite{Wang2020}, referring to the properties of $\Phi_n,\eta$ and $\psi$. We omit it here.
\end{proof}
\begin{lem}\label{lem2}
{\sl Under the assumption of Lemma \ref{lem1}, we have
\begin{align}\label{2.6}
&\int_{-1}^{t} \int_{U_{0}(R)}|\bm{u}|^{2} \bm{u} \cdot \nabla\left(\Phi_{n} \eta \psi\right)\dx  \normalfont{\text{d}s}   \notag\\
\leq& C\sum_{i=0}^{n} \B_{i}  \left(r_{i}^{-1} E_{i}(R)\right)+C(R-\rho)^{-1} \ \E^{\frac{1}{2}} \sum_{i=0}^{n} r_{i}^{\frac{1}{2}}\left(r_{i}^{-1} E_{i}(R)\right),
\end{align}
where $\B_{i}$ is defined in \eqref{3.2}}.
\end{lem}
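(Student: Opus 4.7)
The plan is to split $\nabla(\Phi_n\eta\psi)$ into its horizontal and vertical parts, exploiting that $\psi=\psi(x')$ while $\Phi_n\eta$ depends only on $(x_3,t)$:
\begin{equation*}
\bm u\cdot\nabla(\Phi_n\eta\psi) \;=\; u_3\,\partial_3(\Phi_n\eta)\psi \;+\; \Phi_n\eta\,(u_1\partial_1+u_2\partial_2)\psi.
\end{equation*}
I would then use the dyadic partition $\sum_{j=0}^n\phi_j=\chi_0$ from \eqref{phi} to localize the integral to the parabolic annuli $A_j(R)$ (for $j<n$) and to $Q_n(R)$ (for $j=n$), and apply the pointwise bounds \eqref{Phi_{n}} for $\Phi_n$ and $\partial_3\Phi_n$ on each piece.

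The horizontal part is the easier of the two. It contains the factor $|\nabla'\psi|\leq C(R-\rho)^{-1}$ and the weight $\Phi_n\lesssim r_j^{-1}$ on $A_j(R)$. Writing $|\bm u|^3=|\bm u|^2\cdot|\bm u|$ and using H\"older's inequality, I would bound one copy of $\bm u$ by $\|\bm u\|_{L^2((-r_j^2,0)\times\R^3)}\leq r_j\,\E^{1/2}$ (which picks up the $r_j$ from the time-length of $A_j$), while the remaining $\|\bm u\|^2$ in a suitable anisotropic Lebesgue norm is controlled by $E_j(R)$ via Lemma \ref{energy}. Collecting the resulting powers of $r_j$ yields the second sum on the right-hand side of \eqref{2.6}.

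The main obstacle is the vertical contribution
\begin{equation*}
I \;=\; \int_{-1}^{t}\int_{U_0(R)}|\bm u|^2\,u_3\,\partial_3(\Phi_n\eta)\,\psi\,\dx\ds,
\end{equation*}
because the strong singularity $|\partial_3\Phi_n|\lesssim r_j^{-2}$ on $A_j(R)$ makes a direct bound on $\int|\bm u|^3$ insufficient. The remedy, in the spirit of \cite{Wang2020}, is to exploit the extra regularity of $\partial_3\bm u$ by splitting
\begin{equation*}
u_3(x',x_3) \;=\; \overline{(u_3)}_j(x') \;+\; \bigl(u_3(x',x_3)-\overline{(u_3)}_j(x')\bigr),
\end{equation*}
with the mean value $\overline{(u_3)}_j$ from \eqref{overline}. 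For the first summand, since $\overline{(u_3)}_j$ is independent of $x_3$, I would integrate by parts in $x_3$ (the boundary terms vanish thanks to the $x_3$-cutoff in $\eta$), transferring $\partial_3$ onto $|\bm u|^2$ and producing a factor $\bm u\cdot\partial_3\bm u$ against the milder weight $\Phi_n\lesssim r_j^{-1}$. For the second summand, the Poincar\'e inequality of Lemma \ref{A3}, applied in the $x_3$-variable on the slab $\{|x_3|\lesssim r_j\}$, absorbs one factor of $r_j$, effectively replacing $u_3-\overline{(u_3)}_j$ by $r_j\,\partial_3 u_3$ and again reducing the weight to $r_j^{-1}$. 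In both pieces one thus arrives at a schematic integrand $|\bm u|^2|\partial_3\bm u|$ weighted by $\Phi_n\lesssim r_j^{-1}$.

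To close the estimate, I would apply H\"older's inequality in space-time with exponents $(p_0^*,q_0)$ on $\partial_3\bm u$ and matched anisotropic exponents on $|\bm u|^2$, chosen so that the scaling condition $2/p+2/q_h+1/q_v=3/2$ of Lemma \ref{energy} is satisfied and $\|\bm u\|^2$ is controlled by $E_j(R)$. The time-scaling on the interval $(-r_j^2,0)$ then produces exactly the prefactor $r_j^{2-2/p_0^*-3/q_0}\|\partial_3\bm u\|_{L^{p_0^*}L^{q_0}}=\B_j$, and summing over $j$ yields the first sum in \eqref{2.6}. The compatibility between the anisotropic exponents in Lemma \ref{energy} and the prescribed $(p_0^*,q_0)$-norm on $\partial_3\bm u$ is precisely what forces the restriction $p_0^*>\frac{4q_0}{4q_0-5}$ imposed just before \eqref{3.2}.
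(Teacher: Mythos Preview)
Your proposal is essentially the paper's proof: the split into the vertical term $I_1$ and the horizontal term $I_2$, the dyadic localization via $\phi_k$, the mean-value subtraction $u_3=\overline{(u_3)}_k+(u_3-\overline{(u_3)}_k)$ followed by integration by parts and Poincar\'e (Lemma~\ref{A3}) for $I_1$, and the H\"older closure against $\B_i$ via Lemma~\ref{energy} are exactly the steps the paper carries out. One minor correction on $I_2$: the paper places all three copies of $\bm u$ in $L^4_tL^3_x(Q_i)$ (with the $r_i^{1/2}$ coming from H\"older in time), rather than isolating one factor in $L^2_{t,x}$ as you suggest---your splitting would force the remaining two factors into isotropic $L^4_{t,x}$, whose exponents do not satisfy the constraint $2/p+2/q_h+1/q_v=3/2$ of Lemma~\ref{energy}.
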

\begin{proof}
We first note  that
\begin{align}\label{I}
&\int_{-1}^{t} \int_{U_{0}(R)}|\bm{u}|^{2} \bm{u} \cdot \nabla\left(\Phi_{n} \eta \psi\right)\dx  \text{d}s   \notag\\
=&\int_{-1}^{t} \int_{U_{0}(R)}|\bm{u}|^2 u_{3}\cdot\partial_{3} \left(\Phi_{n} \eta\right) \psi\dx  \text{d}s   +\sum_{\mu=1,2}\int_{-1}^{t} \int_{U_{0}(R)}|\bm{u}|^{2} u_{\mu} \cdot\Phi_{n} \eta\partial_{\mu}\psi\dx  \text{d}s  \notag\\
\eqdefa &I_{1}+I_{2}.
\end{align}

By integration by parts, the estimates \eqref{Phi_{n}} for $\Phi_{n}$, H\"{o}lder inequality, Lemma \ref{energy} and Lemma \ref{A3}, we have
\begin{align}\label{I_{1}}
I_{1}=& \sum_{k=0}^{n}  \int_{Q_{0}(R)}\bm{u}\cdot\bm{u}\cdot u_{3}\cdot\partial_{3} \left(\Phi_{n} \phi_{k}\right) \psi\dx  \text{d}s  \notag\\
=&\sum_{k=0}^{n}  \int_{Q_{0}(R)}\bm{u}\cdot\bm{u}\  \left(u_{3}-\overline{(u_{3})}_{k}\right)\cdot\partial_{3} \left(\Phi_{n} \phi_{k}\right) \psi\dx  \text{d}s  \notag\\
&-2\sum_{k=0}^{n}  \int_{Q_{0}(R)}\partial_{3}\bm{u}\cdot\bm{u}\cdot \overline{(u_{3})}_{k}\cdot \left(\Phi_{n} \phi_{k}\right) \psi\dx  \text{d}s  \notag\\
\lesssim&\sum_{k=0}^{n}  r_{k}^{-2} \int_{-r_{k}^2}^{0}\|\bm{u}\|_{L^{\frac{2q_{0}}{q_{0}-1}}(U_{k}(R))}\|\bm{u}\|_{L_{v}^{2}L_{h}^{\frac{2q_{0}}{q_{0}-1}}(U_{k}(R))}\|u_{3}-\overline{(u_{3})}_{k}\|_{L_{v}^{2q_{0}}L_{h}^{q_{0}}(U_{k}(R))} \text{d}s  \notag\\
&+\sum_{k=0}^{n}  r_{k}^{-1} \int_{-r_{k}^2}^{0}\|\partial_{3}\bm{u}\|_{L^{q_{0}}\left(U_{k}(R)\right)} \|\bm{u}\|_{L^{\frac{2q_{0}}{q_{0}-1}}(U_{k}(R))} \|\overline{(u_{3})}_{k}\|_{L^{\frac{2q_{0}}{q_{0}-1}}(U_{k}(R))}   \text{d}s   \notag\\
\lesssim& \sum_{k=0}^{n} r_{k}^{-1-\frac{1}{2q_{0}}}\int_{-r_{k}^2}^{0}\|\bm{u}\|_{L^{\frac{2q_{0}}{q_{0}-1}}(U_{k}(R))}\|\bm{u}\|_{L_{v}^{2}L_{h}^{\frac{2q_{0}}{q_{0}-1}}(U_{k}(R))} \|\partial_{3}\bm{u}\|_{L^{q_{0}}(U_{k}(R))} \text{d}s  \notag\\
\lesssim& \sum_{k=0}^{n} r_{k}^{1-\frac{2}{p_{0}^*}-\frac{3}{q_{0}}}\|\bm{u}\|_{L^{\frac{4q_{0}}{3}}\left(-r_{k}^2,0;L^{\frac{2q_{0}}{q_{0}-1}}(U_{k}(R))\right)}\|\bm{u}\|_{L^{2q_{0}}\left(-r_{k}^2,0;L_{v}^{2}L_{h}^{\frac{2q_{0}}{q_{0}-1}}(U_{k}(R))\right)}\notag\\
&\quad\times \|\partial_{3}\bm{u}\|_{L^{p_{0}^*}\left(-r_{k}^2,0;L^{q_{0}}(U_{k}(R))\right)}\notag\\
\lesssim&\sum_{i=0}^{n} \B_{i} \left(r_{i}^{-1} E_{i}(R)\right).
\end{align}
For the second term, by \eqref{Phi_{n}} and Lemma \ref{energy} again, we have
\begin{align}\label{I_{2}}
I_{2} \lesssim& \frac{1}{(R-\rho)} \sum_{i=0}^{n} r_{i}^{-1} \int_{Q_{i}(R)}|\bm{u}|^{3} \dx \text{d}s \notag\\
\lesssim& \frac{1}{(R-\rho)} \sum_{i=0}^{n} r_{i}^{-1} r_{i}^{1 / 2}\|\bm{u}\|_{L^{4}\left(-r_{i}^{2}, 0 ; L^{3}\left(U_{i}(R)\right)\right)}^{3} \notag\\
\lesssim&\frac{\E^{\frac{1}{2}}}{(R-\rho)}  \sum_{i=0}^{n} r_{i}^{\frac{1}{2}}\left(r_{i}^{-1} E_{i}(R)\right).
\end{align}
Combining \eqref{I_{1}} with \eqref{I_{2}},  we obtain \eqref{2.6}.
\end{proof}
\subsection{Estimates for the pressure}\label{section3.2}

This part is devoted to the estimates regarding the
third term on the right side of \eqref{key}. Compared with \cite{Chae2019, Wang2020}, the decomposition of the pressure is in a slightly different way.

Given $3\times3$ matrix valued function $f=\left(f_{\mu\nu}\right)$, we set
\begin{align*}
\widehat{\J(f)}(\xi)= \sum_{\mu,\nu=1,2,3} \frac{\xi_{\mu}\xi_{\nu}}{|\xi|^2}\mathcal{F} \left(f_{\mu\nu}\cdot \chi_{U_{0}(R)}\right),
\end{align*}
where the Fourier transform is defined by
\begin{equation*}
\mathcal{F} f(\xi)=\hat{f}(\xi)=\int_{\R^3} e^{-i(x \cdot \xi)} f(x)\dx  .
\end{equation*}
Therefore, $\J: L_{v}^{q_{v}}L_{h}^{q_{h}}\left(U_{0}(R)\right) \rightarrow L_{v}^{q_{v}}L_{h}^{q_{h}}\left(\R^{3}\right)$,  $1<q_{h},q_{v}<+\infty$ defines a bounded linear operator \cite{Lizorkin1970}, with
\begin{align}\label{ani}
\|\J(f)\|_{L_{v}^{q_{v}}L_{h}^{q_{h}}\left(\R^{3}\right)} \leq C\|f\|_{L_{v}^{q_{v}}L_{h}^{q_{h}}\left(U_{0}(R)\right)}.
\end{align}

Denote
\begin{align}\label{fg}
f=\bm{u}\otimes\bm{u}\cdot \chi_{0},
\end{align}
and
\begin{align}\label{pitau}
\pi_{0}=\J(f),\ \pi_{h}=\pi-\pi_{0}.
\end{align}
It follows that
\begin{align*}
-\Delta \pi_{0}=\nabla \cdot \nabla \cdot f, \quad \text { in } \quad \R^{3} \times(-1,0)
\end{align*}
in the sense of distributions and  $\pi_{h}$  is harmonic in $Q_{1}(R)$. Then we have
\begin{align}\label{pressure}
&\int_{-1}^{t} \int_{U_{0}(R)} \pi \bm{u} \cdot \nabla\left(\Phi_{n} \eta \psi\right)\dx  \text{d}s   \notag\\
=&\int_{-1}^{t} \int_{U_{0}(R)} \pi_{0} \bm{u} \cdot \nabla\left(\Phi_{n} \eta \psi\right)\dx  \text{d}s  +\int_{-1}^{t} \int_{U_{0}(R)} \pi_{h} \bm{u} \cdot \nabla\left(\Phi_{n} \eta \psi\right)\dx  \text{d}s   \notag\\
=&\int_{-1}^{t} \int_{U_{0}(R)} \pi_{0} u_{3} \cdot \partial_{3}\left(\Phi_{n} \eta \psi\right)\dx  \text{d}s  +\sum_{\mu=1,2}\int_{-1}^{t} \int_{U_{0}(R)} \pi_{0} u_{\mu} \cdot \Phi_{n} \eta \partial_{\mu}\psi\dx  \text{d}s  \notag\\
&-\int_{-1}^{t} \int_{U_{0}(R)} \nabla\pi_{h}\cdot \bm{u}\cdot  \left(\Phi_{n} \eta \psi\right)\dx  \text{d}s  \notag\\
\eqdefa&J+K+H.
\end{align}
As to $J$, setting $\tau_{0}=\J\left(\partial_{3}\left(\bm{u}\otimes\bm{u}\right)\cdot\chi_{0}\right)$ and $\tau_{h}=\J\left(\bm{u}\otimes\bm{u}\cdot\partial_{3}\chi_{0}\right)$, we have $\partial_{3}\pi_{0}=\tau_{0}+\tau_{h}$ and
\begin{align}\label{J}
J=&\sum_{k=0}^{n}\int_{-1}^{t} \int_{U_{0}(R)} \pi_{0}\cdot u_{3} \cdot \partial_{3}\left(\Phi_{n} \phi_{k} \psi\right)\dx  \text{d}s      \notag\\
=&-\sum_{k=4}^{n}\int_{-1}^{t} \int_{U_{0}(R)} \left(\pi_{0}\cdot \partial_{3}u_{3}+\tau_{0}\cdot u_{3}\right) \cdot \left(\Phi_{n} \phi_{k} \psi\right)\dx  \text{d}s  \notag\\
&-\sum_{k=4}^{n}\int_{-1}^{t} \int_{U_{0}(R)} \tau_{h}\cdot u_{3} \cdot \left(\Phi_{n} \phi_{k} \psi\right)\dx  \text{d}s  \notag\\
&+\sum_{k=0}^{3}\int_{-1}^{t} \int_{U_{0}(R)} \pi_{0}\cdot u_{3} \cdot \partial_{3}\left(\Phi_{n} \phi_{k} \psi\right)\dx  \text{d}s  \notag\\
\eqdefa &J_{1}+J_{2}+J_{3}.
\end{align}
We will present the estimates of $J_{i},i=1,2,3$, $K$ and $H$ in the following several lemmas.
\begin{lem}\label{lem3} {\sl Under the assumption of Lemma \ref{lem1}, we have
	\begin{align}\label{J_{1}}
	J_{1}\leq C\sum_{i=0}^{n} \B_{i} \left(r_{i}^{-1} E_{i}(R)\right).
	\end{align}}
\end{lem}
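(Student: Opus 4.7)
The plan is to bound $J_1$ in parallel with the estimate of $I_1$ carried out in the proof of Lemma~\ref{lem2}, invoking the anisotropic Riesz-type bound \eqref{ani} to handle the nonlocal factors $\pi_0$ and $\tau_0$ in the same way one handles the pointwise products $\bm u \otimes \bm u$ and $\bm u \otimes \partial_3 \bm u$, respectively. The ingredients are the smallness of $\Phi_n$ on $A_k(R)$, anisotropic H\"older in space and time, and Lemma~\ref{energy} combined with the definition of $\B_k$.

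First, note that for $k \geq 4$ the support of $\phi_k$ is contained in $A_k(R) \subset Q_k(R) \setminus Q_{k+1}(R)$, on which $|\Phi_n| \leq c_2 r_k^{-1}$ by \eqref{Phi_{n}}; together with $|\psi| \leq 1$, this reduces the estimate to
\[
|J_1| \leq C \sum_{k=4}^{n} r_k^{-1} \int_{-r_k^{2}}^{0}\!\!\int_{U_k(R)} \bigl(|\pi_0|\,|\partial_3 u_3| + |\tau_0|\,|u_3|\bigr) \dx \ds.
\]

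For the piece $\pi_0 \partial_3 u_3$, I would apply \eqref{ani} to $\pi_0$ and then split the factor $\bm u \otimes \bm u$ via anisotropic H\"older into two $\bm u$ factors, placed in the same pair of anisotropic spaces used for $I_1$, namely $L^{4q_0/3}_t L^{2q_0/(q_0-1)}_x$ and $L^{2q_0}_t L^{2}_v L^{2q_0/(q_0-1)}_h$; each is bounded by $E_k(R)^{1/2}$ via Lemma~\ref{energy}. Pairing with $\partial_3 u_3 \in L^{p_0^*}_t L^{q_0}_x$ (the space H\"older-dual to the anisotropic space chosen for $\pi_0$) and absorbing the time-H\"older slack (positive because $p_0^* > \frac{4q_0}{4q_0-5}$) as the remaining power of $r_k$ should deliver exactly $\B_k r_k^{-1} E_k(R)$. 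For the piece $\tau_0 u_3$, I would combine \eqref{ani} with the product rule $\partial_3(\bm u\otimes\bm u)=\partial_3\bm u\otimes\bm u+\bm u\otimes\partial_3\bm u$ to obtain a bound of the form
\[
\|\tau_0\|_{L^{q_v}_v L^{q_h}_h(\R^{3})} \leq C\,\|\bm u\|_{L^{a_v}_v L^{a_h}_h(U_0(R))}\,\|\partial_3\bm u\|_{L^{b_v}_v L^{b_h}_h(U_0(R))};
\]
together with the $u_3$ factor, the integrand now has exactly the triple-product structure $\bm u\cdot u_3\cdot \partial_3\bm u$ already treated in $I_1$, and the same anisotropic H\"older argument in space and time, closed by Lemma~\ref{energy} on the two $\bm u$ factors and by the $L^{p_0^*}_t L^{q_0}_x$ assumption on $\partial_3\bm u$, yields the same bound.

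The main obstacle is the precise bookkeeping of the anisotropic exponents and $r_k$-scalings: one must select the anisotropic Lebesgue triple for $\pi_0$ and $\tau_0$ so that (i) the boundedness of $\J$ in \eqref{ani} applies, (ii) the H\"older-dual space for $\partial_3 u_3$ (respectively $u_3$) matches its natural integrability, (iii) each of the two $\bm u$ factors coming out of the split lands in a Lemma~\ref{energy}-admissible space, and (iv) the cumulative power of $r_k$ emerging from time H\"older is exactly $r_k^{1-2/p_0^*-3/q_0}$, so that with the definition $\B_k = r_k^{2-2/p_0^*-3/q_0}\|\partial_3\bm u\|_{L^{p_0^*}(-r_k^2,0;L^{q_0}(B(2)))}$ the sum assembles into $\sum_i \B_i\,(r_i^{-1} E_i(R))$. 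A secondary subtlety is that $\J$ is global, so the two $\bm u$ factors naturally sit on $U_0(R)$ rather than $U_k(R)$; restricting to the short time interval $(-r_k^2,0)$ and using Lemma~\ref{energy} on $U_k(R)$ after a suitable majorization is what recovers the local $E_k(R)$ on the right-hand side.
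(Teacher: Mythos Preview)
Your plan has a genuine gap, and it is precisely the point you downgrade to a ``secondary subtlety.'' When you invoke \eqref{ani} on $\pi_0=\J(\bm u\otimes\bm u\,\chi_0)$ or $\tau_0=\J(\partial_3(\bm u\otimes\bm u)\,\chi_0)$, the bound returns $\bm u$-norms on $U_0(R)$, not on $U_k(R)$; there is no ``majorization'' that shrinks the spatial domain, so Lemma~\ref{energy} can only deliver $E_0(R)$. A right-hand side of the form $\sum_k\B_k\,r_k^{-1}E_0(R)$ is useless for the iteration in Section~\ref{section3.3}, since $\sum_k\B_k r_k^{-1}$ need not be finite. Moreover, even ignoring localization, the direct estimate of $\int_{U_k}|\pi_0|\,|\partial_3u_3|$ does not close at the level of exponents: with the anisotropic choice you name for $\pi_0$, the H\"older-dual slot forces $\partial_3u_3\in L^{2q_0}_vL^{q_0}_h$, strictly smaller than the assumed $L^{q_0}$; with the isotropic choice $\pi_0\in L^{q_0/(q_0-1)}$, the time-H\"older residual is $1-\tfrac{3}{2q_0}-\tfrac{1}{p_0^*}=\tfrac{1}{p_0}-\tfrac{1}{p_0^*}<0$ because $p_0^*<p_0$. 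In $I_1$ this obstruction is absent only because Poincar\'e (Lemma~\ref{A3}) places $u_3-\overline{(u_3)}_k$ in $L^{2q_0}_vL^{q_0}_h$ while keeping $\partial_3\bm u$ in $L^{q_0}$; that mechanism is unavailable once you have passed to the already-differentiated form $\pi_0\,\partial_3u_3$.

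The paper's remedy is a further decomposition $\pi_0=\sum_{j=0}^n\pi_{0,j}$ with $\pi_{0,j}=\J(\bm u\otimes\bm u\,\phi_j)$ (and likewise $\tau_0=\sum_j\tau_{0,j}$), splitting the double sum into a near block $j\ge k-3$ and a far block $j\le k-4$. For the near block one re-sums to $\J(\bm u\otimes\bm u\,\chi_{k-3})$, integrates by parts back to reinstate $\partial_3(\Phi_n\phi_k)$, subtracts the mean $\overline{(u_3)}_k$, and then reruns the $I_1$-argument; now \eqref{ani} localizes to $U_{k-3}(R)$ and both the exponent count and the scale match. For the far block, $\pi_{0,j}$ and $\tau_{0,j}$ are harmonic on a neighborhood of $U_k(R)$, and the interior estimates of Corollary~\ref{A2} provide the extra decay in $r_k/r_j$ that makes the double sum converge. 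This near/far dichotomy exploiting harmonicity is the missing idea in your plan.
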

\begin{proof}
Let $\pi_{0,j}=\J\left(\bm{u}\otimes\bm{u}\cdot\phi_{j}\right)$ and $	\tau_{0,j}=\J\left(\partial_{3}\left(\bm{u}\otimes\bm{u}\right)\cdot\phi_{j}\right)$. By integration by parts, we have
\begin{align}\label{J_{1}'}
J_{1}=&-\sum_{k=4}^{n}\sum_{j=0}^{n}\int_{-1}^{t} \int_{U_{0}(R)} \left(\pi_{0,j}\cdot \partial_{3}u_{3}+\tau_{0,j}\cdot u_{3}\right) \cdot \left(\Phi_{n} \phi_{k} \psi\right)\dx  \text{d}s  \notag\\
=&-\sum_{k=4}^{n}\sum_{j=k-3}^{n}\int_{-1}^{t} \int_{U_{0}(R)} \left(\pi_{0,j}\cdot \partial_{3}u_{3}+\tau_{0,j}\cdot u_{3}\right) \cdot \left(\Phi_{n} \phi_{k} \psi\right)\dx  \text{d}s  \notag\\
&-\sum_{k=4}^{n}\sum_{j=0}^{k-4}\int_{-1}^{t} \int_{U_{0}(R)} \left(\pi_{0,j}\cdot \partial_{3}u_{3}+\tau_{0,j}\cdot u_{3}\right) \cdot \left(\Phi_{n} \phi_{k} \psi\right)\dx  \text{d}s  \notag\\
=&-\sum_{k=4}^{n}\int_{-1}^{t} \int_{U_{0}(R)} \J\left(\bm{u}\otimes\bm{u}\cdot\chi_{k-3}\right)\cdot \partial_{3}u_{3} \cdot \left(\Phi_{n} \phi_{k} \psi\right)\dx  \text{d}s  \notag\\
&-\sum_{k=4}^{n}\int_{-1}^{t} \int_{U_{0}(R)} \J\left(\partial_{3}\left(\bm{u}\otimes\bm{u}\right)\cdot\chi_{k-3}\right)\cdot u_{3} \cdot \left(\Phi_{n} \phi_{k} \psi\right)\dx  \text{d}s  \notag\\
&-\sum_{j=0}^{n-4}\sum_{k=j+4}^{n}\int_{-1}^{t} \int_{U_{0}(R)} \left(\pi_{0,j}\cdot \partial_{3}u_{3}+\tau_{0,j}\cdot u_{3}\right) \cdot \left(\Phi_{n} \phi_{k} \psi\right)\dx  \text{d}s  \notag\\
=&\sum_{k=4}^{n}\int_{-1}^{t} \int_{U_{0}(R)} \J\left(\bm{u}\otimes\bm{u}\cdot\chi_{k-3}\right)\cdot \left(u_{3}-\overline{(u_{3})}_{k}\right) \cdot \partial_{3}\left(\Phi_{n} \phi_{k} \psi\right)\dx  \text{d}s  \notag\\
&+\sum_{k=4}^{n}\int_{-1}^{t} \int_{U_{0}(R)} \J\left(\bm{u}\otimes\bm{u}\cdot\partial_{3}\chi_{k-3}\right)\cdot \left(u_{3}-\overline{(u_{3})}_{k}\right) \cdot \left(\Phi_{n} \phi_{k} \psi\right)\dx  \text{d}s  \notag\\
&-\sum_{k=4}^{n}\int_{-1}^{t} \int_{U_{0}(R)} \J\left(\partial_{3}\left(\bm{u}\otimes\bm{u}\right)
\cdot\chi_{k-3}\right)\cdot\overline{(u_{3})}_{k} \cdot \left(\Phi_{n} \phi_{k} \psi\right)\dx  \text{d}s  \notag\\
&-\sum_{j=0}^{n-4}\sum_{k=j+4}^{n}\int_{-1}^{t} \int_{U_{0}(R)} \pi_{0,j}\cdot \partial_{3}u_{3} \cdot \left(\Phi_{n} \phi_{k} \psi\right)\dx  \text{d}s  \notag\\
&-\sum_{j=0}^{n-4}\sum_{k=j+4}^{n}\int_{-1}^{t} \int_{U_{0}(R)} \tau_{0,j}\cdot  \left(u_{3}-\overline{(u_{3})}_{j}\right) \cdot \left(\Phi_{n} \phi_{k} \psi\right)\dx  \text{d}s  \notag\\
&-\sum_{j=0}^{n-4}\sum_{k=j+4}^{n}\int_{-1}^{t} \int_{U_{0}(R)} \tau_{0,j}\cdot \overline{(u_{3})}_{j} \cdot \left(\Phi_{n} \phi_{k} \psi\right)\dx  \text{d}s  \notag\\
\eqdefa&J_{11}+J_{12}+J_{13}+J_{14}+J_{15}+J_{16}.
\end{align}
By \eqref{Phi_{n}}, \eqref{ani}, Lemma \ref{energy} and Lemma \ref{A3}, we have
\begin{align}\label{J_{11}}
J_{11}\lesssim&\sum_{k=4}^{n}r_{k}^{-2}\int_{-r_{k}^2}^{0}\|\J\left(\bm{u}\otimes\bm{u}\cdot\chi_{k-3}\right)\|_{L_{v}^{\frac{2q_{0}}{2q_{0}-1}}L_{h}^{\frac{q_{0}}{q_{0}-1}}(U_{k}(R))}\|u_{3}-\overline{(u_{3})}_{k}\|_{L_{v}^{2q_{0}}L_{h}^{q_{0}}(U_{k}(R))} \text{d}s  \notag\\
\lesssim& \sum_{k=0}^{n} r_{k}^{-1-\frac{1}{2q_{0}}}\int_{-r_{k}^2}^{0}\|\bm{u}\|_{L^{\frac{2q_{0}}{q_{0}-1}}(U_{k}(R))}\|\bm{u}\|_{L_{v}^{2}L_{h}^{\frac{2q_{0}}{q_{0}-1}}(U_{k}(R))} \|\partial_{3}\bm{u}\|_{L^{q_{0}}(U_{k}(R))} \text{d}s  \notag\\
\lesssim&\sum_{i=0}^{n} \B_{i} \left(r_{i}^{-1} E_{i}(R)\right),
\end{align}
which is analogous to \eqref{I_{1}}. Similarly, for $J_{12},J_{13}$, we have
\begin{align}\label{J_{12}}
J_{12}\lesssim&\sum_{k=4}^{n}r_{k}^{-1}\int_{-r_{k}^2}^{0}\|\J
\left(\bm{u}\otimes\bm{u}\cdot\partial_{3}\chi_{k-3}\right)
\|_{L_{v}^{\frac{2q_{0}}{2q_{0}-1}}L_{h}^{\frac{q_{0}}{q_{0}-1}}(U_{k}(R))}
\|u_{3}-\overline{(u_{3})}_{k}\|_{L_{v}^{2q_{0}}L_{h}^{q_{0}}(U_{k}(R))} \text{d}s  \notag\\
\lesssim& \sum_{k=0}^{n} r_{k}^{-1-\frac{1}{2q_{0}}}\int_{-r_{k}^2}^{0}\|\bm{u}\|_{L^{\frac{2q_{0}}
{q_{0}-1}}(U_{k}(R))}\|\bm{u}\|_{L_{v}^{2}L_{h}^{\frac{2q_{0}}{q_{0}-1}}(U_{k}(R))} \|\partial_{3}\bm{u}\|_{L^{q_{0}}(U_{k}(R))} \text{d}s  \notag\\
\lesssim&\sum_{i=0}^{n} \B_{i} \left(r_{i}^{-1} E_{i}(R)\right),
\end{align}
and
\begin{align}\label{J_{13}}
J_{13}\lesssim&\sum_{k=4}^{n}r_{k}^{-1}\int_{-r_{k}^2}^{0}
\|\J\left(\partial_{3}\left(\bm{u}\otimes\bm{u}\right)\cdot\chi_{k-3}\right)
\|_{L^{\frac{2q_{0}}{q_{0}+1}}(U_{k}(R))}\|\overline{(u_{3})}_{k}
\|_{L^{\frac{2q_{0}}{q_{0}-1}}(U_{k}(R))} \text{d}s  \notag\\
\lesssim& \sum_{k=0}^{n} r_{k}^{-1-\frac{1}{2q_{0}}}\int_{-r_{k}^2}^{0}
\|\bm{u}\|_{L^{\frac{2q_{0}}{q_{0}-1}}(U_{k}(R))}
\|\bm{u}\|_{L_{v}^{2}L_{h}^{\frac{2q_{0}}{q_{0}-1}}(U_{k}(R))} \|\partial_{3}\bm{u}\|_{L^{q_{0}}(U_{k}(R))} \text{d}s  \notag\\
\lesssim&\sum_{i=0}^{n} \B_{i} \left(r_{i}^{-1} E_{i}(R)\right).
\end{align}
By \eqref{Phi_{n}}, \eqref{pi_{0,j}} and Lemma \ref{energy}, we have
\begin{align}\label{J_{14}}
J_{14}=&\sum_{j=0}^{n-4}\sum_{k=j+4}^{n} r_{k}^{-1}\int_{-r_{k}^2}^{0}\|\pi_{0,j}\|_{L^{\frac{q_{0}}{q_{0}-1}}(U_{k}(R))}\|\partial_{3}u_{3}\|_{L^{q_{0}}(U_{k}(R))} \text{d}s  \notag\\
\lesssim&\sum_{j=0}^{n-4}\sum_{k=j+4}^{n}r_{j}^{-1-\frac{1}{2q_{0}}} r_{k}^{-\frac{1}{q_{0}}}\int_{-r_{k}^2}^{0}\|\bm{u}\|_{L^{\frac{4q_{0}}{2q_{0}-1}}(U_{j}(R))}^2\|\partial_{3}\bm{u}\|_{L^{q_{0}}(U_{k}(R))} \text{d}s  \notag\\
\lesssim&\sum_{j=0}^{n-4}\sum_{k=j+4}^{n}r_{j}^{-1-\frac{1}{2q_{0}}} r_{k}^{2-\frac{5}{2q_{0}}-\frac{2}{p_{0}^*}}\|\bm{u}\|_{L^{\frac{8q_{0}}{3}}\left(-r_{k}^2,0;L^{\frac{4q_{0}}{2q_{0}-1}}(U_{j}(R))\right)}^2\nonumber\\
&\quad\times\|\partial_{3}\bm{u}\|_{L^{p_{0}^*}\left(-r_{k}^2,0;L^{q_{0}}(U_{k}(R))\right)}\notag\\
\lesssim&\sum_{i=0}^{n} \B_{i} \left(r_{i}^{-1} E_{i}(R)\right).
\end{align}
By \eqref{Phi_{n}}, \eqref{tau_{0,j}}, Lemma \ref{energy} and Lemma \ref{A3}, we have
\begin{align}\label{J_{15}}
J_{15}\lesssim&\sum_{j=0}^{n-4}\sum_{k=j+4}^{n} r_{k}^{-1}\int_{-r_{k}^2}^{0}\|\tau_{0,j}\|_{L^{\frac{q_{0}}{q_{0}-1}}(U_{k}(R))}\|u_{3}-\overline{(u_{3})}_{j}\|_{L^{q_{0}}(U_{k}(R))} \text{d}s  \notag\\
\lesssim&\sum_{j=0}^{n-4}\sum_{k=j+4}^{n}r_{j}^{-1-\frac{3}{2q_{0}}} \int_{-r_{k}^2}^{0}\|\bm{u}\|_{L^{\frac{4q_{0}}{2q_{0}-1}}(U_{j}(R))}^2\|\partial_{3}\bm{u}\|_{L^{q_{0}}(U_{j}(R))} \text{d}s  \notag\\
\lesssim&\sum_{j=0}^{n-4}\sum_{k=j+4}^{n}r_{j}^{-1-\frac{3}{2q_{0}}} r_{k}^{2-\frac{3}{2q_{0}}-\frac{2}{p_{0}^*}}\|\bm{u}\|_{L^{\frac{8q_{0}}{3}}\left(-r_{k}^2,0;L^{\frac{4q_{0}}{2q_{0}-1}}(U_{j}(R))\right)}^2\nonumber\\
&\quad\quad\qquad\times\|\partial_{3}\bm{u}\|_{L^{p_{0}^*}\left(-r_{k}^2,0;L^{q_{0}}(U_{j}(R))\right)}
\notag\\
\lesssim&\sum_{i=0}^{n} \B_{i} \left(r_{i}^{-1} E_{i}(R)\right).
\end{align}
By \eqref{Phi_{n}}, \eqref{tau'_{0,j}}, Lemma \ref{energy} and Lemma \ref{A3}, we have
\begin{align}\label{J_{16}}
J_{16}\lesssim&\sum_{j=0}^{n-4}\sum_{k=j+4}^{n} r_{k}^{-1}\int_{-r_{k}^2}^{0}\|\tau_{0,j}\|_{L^{2}(U_{k}(R))}\|\overline{(u_{3})}_{j}\|_{L^{2}(U_{k}(R))} \text{d}s  \notag\\
\lesssim&\sum_{j=0}^{n-4}\sum_{k=j+4}^{n}r_{j}^{-1-\frac{3}{2q_{0}}} \int_{-r_{k}^2}^{0}\|\partial_{3}\bm{u}\otimes\bm{u}\|_{L^{\frac{2q_{0}}{q_{0}+1}}(U_{j}(R))}\|\bm{u}\|_{L^{2}(U_{j}(R))} \text{d}s  \notag\\
\lesssim&\sum_{j=0}^{n-4}\sum_{k=j+4}^{n}r_{j}^{-1-\frac{3}{2q_{0}}} \int_{-r_{k}^2}^{0}\|\bm{u}\|_{L^{\frac{2q_{0}}{q_{0}-1}}(U_{j}(R))}
\|\partial_{3}\bm{u}\|_{L^{q_{0}}(U_{j}(R))}\|\bm{u}\|_{L^{2}(U_{j}(R))} \text{d}s  \notag\\
\lesssim&\sum_{j=0}^{n-4}\sum_{k=j+4}^{n}r_{j}^{-1-\frac{3}{2q_{0}}} r_{k}^{2-\frac{3}{2q_{0}}-\frac{2}{p_{0}^*}}\|\bm{u}\|_{L^{\frac{4q_{0}}{3}}\left(-r_{k}^2,0;L^{\frac{2q_{0}}{q_{0}-1}}(U_{j}(R))\right)}\|\bm{u}\|_{L^{\infty}\left(-r_{k}^2,0;L^{2}(U_{j}(R))\right)}\notag\\
&\qquad\quad\quad\times\|\partial_{3}\bm{u}\|_{L^{p_{0}^*}\left(-r_{k}^2,0;L^{q_{0}}(U_{j}(R))\right)}\notag\\
\lesssim&\sum_{i=0}^{n} \B_{i} \left(r_{i}^{-1} E_{i}(R)\right).
\end{align}
Summing up all the estimates of \eqref{J_{1}'}, \eqref{J_{11}}, \eqref{J_{12}}, \eqref{J_{13}}, \eqref{J_{14}}, \eqref{J_{15}} and \eqref{J_{16}}, we have \eqref{J_{1}}.
\end{proof}
\begin{lem}\label{lem4} {\sl Under the assumption of Lemma \ref{lem1}, we have
\begin{equation}\label{H}
J_{2}+J_{3}+H\leq \frac{C}{(R-\rho)^{\frac{5}{2}}} \E^{\frac{3}{2}}.
\end{equation}}
\end{lem}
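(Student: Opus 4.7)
The three terms $J_{2}$, $J_{3}$ and $H$ are exactly the pieces of the pressure contribution in \eqref{pressure} that are not sensitive to the singular behaviour of $\Phi_{n}$ at $(0,r_{n}^{2})$, so none of them needs the Poincar\'e-based trick with $\overline{(u_{3})}_{k}$ used for $J_{1}$. The plan is to bound $J_{2}$ and $J_{3}$ directly by $C\E^{3/2}$ (which, because $R-\rho\le\tfrac{1}{2}$, is already dominated by $C(R-\rho)^{-5/2}\E^{3/2}$) and to extract the genuine factor $(R-\rho)^{-5/2}$ only from $H$, via interior regularity of the harmonic function $\pi_{h}$.

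For $J_{3}$ the index $k$ is restricted to $\{0,1,2,3\}$, so $r_{k}\sim 1$ and \eqref{Phi_{n}} yields $|\partial_{3}(\Phi_{n}\phi_{k}\psi)|\lesssim 1$ on the region of integration (using also $\partial_{3}\psi=0$). H\"older's inequality with exponents $(3,3/2)$, the Calder\'on--Zygmund bound \eqref{ani} applied to $\pi_{0}$, and the Ladyzhenskaya-type interpolation $\|\bm u\|_{L^{3}_{t,x}(Q_{0}(R))}^{3}\lesssim\E^{3/2}$ (coming from $\|\bm u\|_{L^{3}_{x}}\lesssim\|\bm u\|_{L^{2}_{x}}^{1/2}\|\nabla\bm u\|_{L^{2}_{x}}^{1/2}$ together with Cauchy--Schwarz in time) give $|J_{3}|\lesssim\E^{3/2}$. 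For $J_{2}$, the source $\bm u\otimes\bm u\cdot\partial_{3}\chi_{0}$ defining $\tau_{h}$ is supported in $\{|x_{3}|\in[\tfrac{1}{2},1]\}\cup\{t\in[-\tfrac{1}{2},-\tfrac{1}{4}]\}$, well away from the singular axis of $\Phi_{n}$. Applying \eqref{ani} exactly as in the proof of Lemma~\ref{lem3} and summing the resulting geometric series in $k$ gives $|J_{2}|\lesssim\E^{3/2}$ as well.

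For $H$, I would first integrate by parts in $x$ (using $\nabla\cdot\bm u=0$) to rewrite $H$ as $\int_{-1}^{t}\!\int_{U_{0}(R)}\pi_{h}\,\bm u\cdot\nabla(\Phi_{n}\eta\psi)\,dx\,ds$. On $B'(R)\times(-\tfrac{1}{2},\tfrac{1}{2})\times(-\tfrac{1}{4},0)$ we have $\chi_{0}=1$, so $\pi_{h}=\pi-\pi_{0}$ is spatially harmonic there. Since $\mathrm{supp}\,\psi\subset B'(\tfrac{R+\rho}{2})$ is at distance $\ge(R-\rho)/2$ from $\partial B'(R)$, interior regularity of harmonic functions yields a pointwise bound of the form $\|\pi_{h}(\cdot,t)\|_{L^{\infty}(\mathrm{supp}\,\psi)}\lesssim(R-\rho)^{-2}\|\pi_{h}(\cdot,t)\|_{L^{3/2}(B'(R)\times(-\tfrac{1}{2},\tfrac{1}{2}))}$, and a Calder\'on--Zygmund argument applied to \eqref{NS} together with \eqref{ani} yields $\|\pi_{h}\|_{L^{3/2}_{t,x}}\lesssim\|\pi\|_{L^{3/2}_{t,x}}+\|\bm u\|_{L^{3}_{t,x}}^{2}\lesssim\E$. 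The remaining $\int\!\!\int|\bm u||\nabla(\Phi_{n}\eta\psi)|\,dx\,ds$ splits into contributions from $|\nabla\Phi_{n}|$, from $|\partial_{3}\eta|$ (supported away from the singular axis, hence harmless), and from $|\nabla\psi|\lesssim(R-\rho)^{-1}$, each controlled by an $\E^{1/2}$-type quantity as in the estimate of $I_{1}$ in Lemma~\ref{lem2}; combining all powers of $(R-\rho)$ and $\E$ produces the claimed bound.

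The main obstacle is the precise H\"older balancing that has to yield exactly $-\tfrac{5}{2}$ in $R-\rho$ and $\tfrac{3}{2}$ in $\E$: the $L^{p}$-exponent in the interior estimate for $\pi_{h}$, the $(R-\rho)^{-1}$ loss from $\nabla\psi$, and the integrability of $\bm u\,\Phi_{n}$ must all conspire, and some care is needed because $\pi$ is only in $L^{3/2}$ (not $L^{2}$), so the interior estimate must be started at the $L^{3/2}$ level on the outer ball rather than at $L^{2}$.
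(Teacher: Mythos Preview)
Your treatment of $J_{3}$ is fine, and your idea for $J_{2}$ is also correct once you make explicit (as you hint) that the source of $\tau_{h}$ vanishes on all of $\R^{2}\times(-\tfrac12,\tfrac12)$ for $t\in(-\tfrac14,0)$; thus $\tau_{h}$ is harmonic there with \emph{no horizontal boundary}, the mean-value bound can be taken over balls of radius $\sim 1$, and the $k$-sum converges geometrically. (Citing only \eqref{ani} undersells this step: you really need the harmonicity, not just Calder\'on--Zygmund.)

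The genuine gap is in $H$. After your integration by parts the dangerous vertical piece is $\sum_{k\ge 4}\int \pi_{h}\,u_{3}\,\partial_{3}(\Phi_{n}\phi_{k})\,\psi$, with $|\partial_{3}(\Phi_{n}\phi_{k})|\lesssim r_{k}^{-2}$. Unlike $\tau_{h}$, the function $\pi_{h}$ is harmonic only in $B'(R)\times(-\tfrac12,\tfrac12)$, because its source still contains $\bm u\otimes\bm u$ on $\{|x'|>R\}$. Hence the interior estimate can only use balls of radius $\sim R-\rho$ and returns $\|\pi_{h}\|_{L^{\infty}}\lesssim (R-\rho)^{-2}\|\pi_{h}\|_{L^{3/2}}$ with \emph{no} gain in $r_{k}$. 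Running the natural $L^{3/2}$--$L^{3}$ H\"older split (exactly the paper's computation, but with the extra $r_{k}^{-1}$) one lands on $(R-\rho)^{-1}\sum_{k\ge 4}r_{k}^{-5/6}\E^{3/2}$, which diverges; other admissible splits do no better. Your integration by parts has traded one power of $(R-\rho)^{-1}$ for one power of $r_{k}^{-1}$, and since $R-\rho$ is fixed while $r_{k}\to 0$ this trade is fatal. The factorisation you propose, $\|\pi_{h}\|_{L^{\infty}_{x}}\cdot\int\!\!\int|\bm u|\,|\nabla(\Phi_{n}\eta\psi)|$, does not rescue the argument either: the second factor is bounded only as a \emph{space-time} integral, while $\|\pi_{h}(\cdot,t)\|_{L^{\infty}_{x}}$ is merely in $L^{3/2}_{t}$, not $L^{\infty}_{t}$.

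The paper's remedy is to split $H$ at $k=3$: for $k\le 3$ it integrates by parts (harmless, since $r_{k}\sim 1$), but for $k\ge 4$ it \emph{keeps} $\nabla\pi_{h}$ together with $\Phi_{n}\lesssim r_{k}^{-1}$, covers $B'(\tfrac{R+\rho}{2})$ by horizontal balls of radius $\sim R-\rho$, and applies the mean-value inequality to $\nabla\pi_{h}$ (losing $(R-\rho)^{-3}$ rather than $(R-\rho)^{-2}$). The outcome is $(R-\rho)^{-5/2}\sum_{k}r_{k}^{1/6}\E^{3/2}$, which converges and delivers exactly the stated exponent.
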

\begin{proof}
	The proof of this lemma is similar with Lemma 3.3 in \cite{Wang2020}.
	By using of integration by parts  and  $\nabla \cdot \bm{u}=0$, we have
	\begin{align}\label{3.25}
	H=&-\sum_{k=0}^{3}\int_{Q_{0}(R)} \nabla\pi_{h}\cdot \bm{u}\cdot  \left(\Phi_{n} \phi_{k} \psi\right)\dx  \text{d}s  -\sum_{k=4}^{n}\int_{Q_{0}(R)} \nabla\pi_{h}\cdot \bm{u}\cdot  \left(\Phi_{n} \phi_{k} \psi\right)\dx  \text{d}s  \notag\\
	=&\sum_{k=0}^{3}\int_{Q_{0}(R)} \pi_{h}\cdot \bm{u}\cdot  \nabla\left(\Phi_{n} \phi_{k} \psi\right)\dx  \text{d}s  -\sum_{k=4}^{n}\int_{Q_{0}(R)} \nabla\pi_{h}\cdot \bm{u}\cdot  \left(\Phi_{n} \phi_{k} \psi\right)\dx  \text{d}s  \notag\\
	\eqdefa&H_{1}+H_{2}.
	\end{align}
	Applying \eqref{Phi_{n}} and H\"{o}lder's inequality, we have
	\begin{align}\label{3.26}
	J_{3}+H_{1}\lesssim&\int_{Q_{0}(R)}|\pi_{0}||\bm{u}|\dx  \text{d}s  +\frac{1}{R-\rho}\int_{Q_{0}(R)}|\pi_{h}||\bm{u}|\dx  \text{d}s  \notag\\
	\lesssim& \frac{1}{R-\rho}\|\bm{u}\|_{L^{3}\left(\R^3\times(-1,0)\right)}^3\notag\\
	\lesssim&\frac{1}{R-\rho} \E^{\frac{3}{2}}.
	\end{align}
	Moreover, we may choose a finite family of points $\left\{x_{\nu}^{\prime}\right\}$ in $ B^{\prime}\left(\frac{R+\rho}{2}\right)$ such that $\left\{B^{\prime}(x_{\nu}^{\prime};\frac{R-\rho}{4})\right\}$  cover the ball $ B^{\prime}\left(\frac{R+\rho}{2}\right)$ and
	\begin{align}
	\sum_{\nu}^{}\chi_{B^{\prime}(x_{\nu}^{\prime};\frac{R-\rho}{2})} \leq C.
	\end{align}
	\begin{align}\label{5.6}
	J_{2}+H_{2}\lesssim&  \sum_{k=4}^{n} \sum_{\nu} r_{k}^{-1} \int_{Q_{k}(R) \cap\left\{\left|x^{\prime}-x_{\nu}^{\prime}\right|<\frac{R-\rho}{4}\right\}}\left(|\nabla \pi_{h}|+|\tau_{h}|\right) \cdot|\bm{u}|\dx  \text{d}s  \notag\\
	\lesssim&  \sum_{k=4}^{n} \sum_{\nu} r_{k}^{-1}\left\||\nabla \pi_{h}|+|\tau_{h}|\right\|_{L^{\frac{3}{2}}\left(Q_{k}(R) \cap\left\{\left|x^{\prime}-x_{\nu}^{\prime}\right|<\frac{R-\rho}{4}\right\}\right)}\|u\|_{L^{3}\left(Q_{k}(R) \cap\left\{\left|x^{\prime}-x_{\nu}^{\prime}\right|<\frac{R-\rho}{4}\right\}\right)}\notag\\
	\lesssim& (R-\rho)^{\frac{4}{3}} \sum_{k=4}^{n} \sum_{\nu} r_{k}^{-\frac{1}{3}}\left\||\nabla \pi_{h}|+|\tau_{h}|\right\|_{L^{\frac{3}{2}}\left(-r_{k}^{2}, 0 ; L^{\infty}\left(U_{k}(R) \cap\left\{\left|x^{\prime}-x_{\nu}^{\prime}\right|<\frac{R-\rho}{4}\right\}\right)\right)}\notag\\
	&\quad\times\|u\|_{L^{3}\left(Q_{k}(R) \cap\left\{\left|x^{\prime}-x_{\nu}^{\prime}\right|<\frac{R-\rho}{4}\right\}\right)} \notag\\
	\lesssim& (R-\rho)^{2} \sum_{k=4}^{n} \sum_{\nu} r_{k}^{-\frac{1}{3}}\left\||\nabla \pi_{h}|+|\tau_{h}|\right\|_{L^{\frac{3}{2}}\left(-r_{k}^{2}, 0; L^{\infty}\left(U_{k}(R) \cap\left\{x^{\prime}-x_{\nu}^{\prime} \mid<\frac{R-\rho}{4}\right\}\right)\right)}^{\frac{3}{2}} \notag\\
	&+  \sum_{k=2}^{n} r_{k}^{-\frac{1}{3}}\|u\|_{L^{3}\left(Q_{k}(R)\right) }^{3}.
	\end{align}
	For any
	\begin{align*}
	x^{*} \in U_{k}(R) \cap\left\{x=\left(x^{\prime}, x_{3}\right) ;\left|x^{\prime}-x_{\nu}^{\prime}\right|<\frac{R-\rho}{4}\right\},
	\end{align*}
	we have
	\begin{align*}
	x^{*} \in B\left(x^* ; \frac{R-\rho}{4}\right) \subset U_{1}(R) \cap\left\{x=(x^{\prime},x_{3});\left|x^{\prime}-x_{\nu}^{\prime}\right|<\frac{R-\rho}{2}\right\},
	\end{align*}
	due to $k \geq 4$ and $|R-\rho| \leq \frac{1}{2} .$
	Since $\pi_{h},\tau_{h}$ are harmonic in $U_{1}(R),$ using the mean value property, we have
	\begin{align*}
	\left|\nabla \pi_{h}\right|\left(x^{*}\right)+|\tau_{h}|\left(x^{*}\right)  \lesssim & \frac{1}{|R-\rho|^{4}} \int_{B\left(x^* ; \frac{R-\rho}{4}\right)}|\pi_{h}|+|\tau_{h}|\dx   \\
	\lesssim& \frac{1}{(R-\rho)^{3}}\left\||\pi_{h}|+|\tau_{h}|\right\|_{L^ \frac{3}{2}\left(U_{1}(R) \cap\left\{\left|x^{\prime}-x_{\nu}^{\prime}\right|<\frac{R-\rho}{2}\right\}\right)},
	\end{align*}
	which implies
	\begin{align}\label{5.7}
	&\left\||\nabla \pi_{h}|+|\tau_{h}|\right\|_{L^{\frac{3}{2}}\left(-r_{k}^{2}, 0 ; L^{\infty}\left(U_{k}(R) \cap\left\{\left|x^{\prime}-x_{\nu}^{\prime} \right|<\frac{R-\rho}{4}\right\}\right)\right.}^{\frac{3}{2}} \notag\\
	=& \int_{-r_{k}^{2}}^{0}\left\||\nabla \pi_{h}|+|\tau_{h}|\right\|_{L^{\infty}\left(U_{k}(R) \cap\left\{\left|x^{\prime}-x_{\nu}^{\prime}\right|<\frac{R-\rho}{4}\right\}\right)}^{\frac{3}{2}} \text{d}s   \notag\\
	\lesssim& \frac{1}{(R-\rho)^{\frac{9}{2}}} \int_{-r_{k}^{2}}^{0} \int_{U_{1}(R)}\left(|\pi_{h}|+|\tau_{h}|\right)^{\frac{3}{2}}\chi_{B^{\prime}(x_{\nu}^{\prime};\frac{R-\rho}{2})}\dx  \text{d}s  .
	\end{align}
	Hence, combining \eqref{5.6} and \eqref{5.7} and taking sum over $\nu$ , we have
	\begin{align}\label{3.30}
     J_{2}+H_{2}\lesssim& \frac{1}{(R-\rho)^{\frac{5}{2}}} \sum_{k=2}^{n} r_{k}^{-\frac{1}{3}}\left(\left\||\pi_{h}|+|\tau_{h}|\right\|_{L^{\frac{3}{2}}\left(-r_{k}^{2}, 0 ; L^{\frac{3}{2}}\left(U_{1}(R)\right)\right)}^{\frac{3}{2}}+\|u\|_{L^{3}\left(-r_{k}^{2}, 0 ; L^{3}\left(U_{k}(R)\right)\right)}^{3}\right) \notag\\
     \lesssim& \frac{1}{(R-\rho)^{\frac{5}{2}}} \sum_{k=2}^{n} r_{k}^{\frac{1}{6}}\|u\|_{L^{4}\left(-r_{k}^{2}, 0 ; L^{3}\left(\R^3\right)\right)}^{3}\notag\\
     \lesssim& \frac{1}{(R-\rho)^{\frac{5}{2}}}\E^{\frac{3}{2}}.
	\end{align}
Summing up \eqref{3.25}, \eqref{3.26} and \eqref{3.30} , we have \eqref{H}.
\end{proof}

\begin{lem}\label{lem5}{\sl Under the assumption of Lemma \ref{lem1}, we have
	\begin{align}\label{K}
	K\leq C\frac{\E^{\frac{1}{2}}}{R-\rho} \sum_{i=0}^{n} r_{i}^{\frac{1}{2}}\left(r_{i}^{-1} E_{i}(R)\right).
	\end{align}}
\end{lem}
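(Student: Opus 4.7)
The term $K$ is structurally parallel to $I_2$ in Lemma \ref{lem2}: both are controlled by an annular factor $|\partial_\mu\psi|\lesssim (R-\rho)^{-1}$ (supported in the thin shell $B'(R)\setminus B'(\rho)$), multiplied by a horizontal velocity component, times $\Phi_n\eta$. The only change is that $|\bm u|^2$ in $I_2$ is replaced by the nonlocal pressure $\pi_0=\J(\bm u\otimes\bm u\cdot\chi_0)$. The plan is therefore to repeat the dyadic decomposition used for $I_2$ and treat $\pi_0$ through Calder\'on--Zygmund theory.

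The plan is: (i) split $K$ into dyadic layers by inserting $1=\sum_{i=0}^n\phi_i$ from \eqref{phi} so that the $i$-th piece lives on $Q_i(R)$ and, by \eqref{Phi_{n}}, satisfies $\Phi_n\lesssim r_i^{-1}$ there, giving
\begin{equation*}
|K|\ \lesssim\ \frac{1}{R-\rho}\sum_{i=0}^{n}r_i^{-1}\int_{Q_i(R)}|\pi_0||\bm u|\dx\ds;
\end{equation*}
(ii) apply H\"older in space-time to get $\int_{Q_i(R)}|\pi_0||\bm u|\leq\|\pi_0\|_{L^{3/2}(Q_i(R))}\,\|\bm u\|_{L^{3}(Q_i(R))}$; (iii) use the boundedness of $\J$ on $L^{3/2}(\R^3)$ to obtain $\|\pi_0(\cdot,s)\|_{L^{3/2}(\R^3)}\lesssim\|\bm u(\cdot,s)\|_{L^3(U_0(R))}^{2}$, and then integrate in time using H\"older and the global bound $\|\bm u\|_{L^4(-1,0;L^3)}\lesssim\E^{1/2}$; (iv) for the velocity factor, apply Lemma \ref{energy} with $p=4$, $q_h=q_v=3$ on the local cylinder $U_i(R)\times(-r_i^2,0)$, combined again with the global $L^4_tL^3_x$ control, exactly as was done in \eqref{I_{2}} for $I_2$.

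Carrying this out gives per-layer bounds of the form $r_i^{-1}\cdot r_i^{1/2}\E^{1/2}E_i(R)$, whose summation produces \eqref{K}. The key inputs mirror those already used for $I_2$, with the Calder\'on--Zygmund inequality substituting for the pointwise $|\pi_0|\lesssim|\bm u|^2$ that would hold for a purely local pressure. I would point out explicitly, as the paper does, that this step does not require the anisotropic multiplier bound \eqref{ani}: the ordinary isotropic Calder\'on--Zygmund estimate suffices, because horizontal derivatives do not appear here (the integrand is multiplied only by $\partial_\mu\psi$, which is absorbed into $(R-\rho)^{-1}$).

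The main (minor) obstacle is the nonlocality of $\J$: the pressure on $Q_i(R)$ depends on $\bm u\otimes\bm u$ over all of $U_0(R)$, so one cannot localize $\pi_0$ spatially the way one localizes $\bm u$ through Lemma \ref{energy}. This is precisely why one power of $E_i(R)^{1/2}$ has to be traded for $\E^{1/2}$ when estimating $\pi_0$, producing the factor $\E^{1/2}$ in the final bound \eqref{K} rather than a purely local $E_i(R)$-valued estimate. Because the factor $\partial_\mu\psi$ already eliminates the need to account for the more delicate cancellations used in the estimates of $J_1$, no mean-value correction $\overline{(u_3)}_k$ is needed here.
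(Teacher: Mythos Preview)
Your plan has a genuine gap in step (iii)--(iv): the per-layer bound you claim, $r_i^{-1}\cdot r_i^{1/2}\,\E^{1/2}E_i(R)$, is not what your method actually produces. The global Calder\'on--Zygmund estimate $\|\pi_0(\cdot,s)\|_{L^{3/2}(\R^3)}\lesssim\|\bm u(\cdot,s)\|_{L^3(U_0(R))}^2$ places \emph{both} factors of $\bm u$ on the large cylinder $U_0(R)$, not on $U_i(R)$. Following your H\"older split one obtains
\[
r_i^{-1}\int_{Q_i(R)}|\pi_0||\bm u|\dx\ds\ \lesssim\ r_i^{-1}\cdot r_i^{1/2}\,\|\bm u\|_{L^4_tL^3(U_0(R))}^2\,\|\bm u\|_{L^4_tL^3(U_i(R))}\ \lesssim\ r_i^{-1/2}\,\E\,E_i(R)^{1/2},
\]
which is $\frac{\E}{R-\rho}\sum_i\bigl(r_i^{-1}E_i(R)\bigr)^{1/2}$ after summation. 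That is strictly weaker than \eqref{K} and, crucially, does not fit the iteration in Section~\ref{section3.3}: there is no small summable weight multiplying $r_i^{-1}E_i(R)$, and any attempt to recover one via Young's or Cauchy--Schwarz produces a divergent remainder (growing like $n$). Your own remark that ``one power of $E_i(R)^{1/2}$ has to be traded for $\E^{1/2}$'' actually describes the \emph{desired} outcome; the naive approach trades two such powers.

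The missing idea is to localise the pressure. One decomposes $\pi_0=\sum_{j=0}^{n}\pi_{0,j}$ with $\pi_{0,j}=\J(\bm u\otimes\bm u\cdot\phi_j)$, and treats each layer $i$ by splitting into a near-diagonal part $\sum_{j\geq i-3}\pi_{0,j}=\J(\bm u\otimes\bm u\cdot\chi_{i-3})$ and an off-diagonal part $j\leq i-4$. For the near part, Calder\'on--Zygmund now gives $\|\J(\bm u\otimes\bm u\,\chi_{i-3})\|_{L^{3/2}}\lesssim\|\bm u\|_{L^3(U_{i-3}(R))}^2$, which is local and yields $E_{i-3}(R)\,\E^{1/2}$ after bounding the remaining $\bm u$ globally; a reindexing gives the required $r_i^{1/2}(r_i^{-1}E_i(R))$. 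For the off-diagonal part, each $\pi_{0,j}$ is harmonic in $U_i(R)$ and Corollary~\ref{A2} (equation~\eqref{pi_{0,j}}) gives the gain $r_i^{1/q}r_j^{2/q-3/\ell}$ that makes the double sum $\sum_j\sum_{i>j+3}$ converge back to $\E^{1/2}\sum_j r_j^{1/2}(r_j^{-1}E_j(R))$. This is precisely the mechanism in Lemma~3.3 of \cite{Wang2020} that the paper invokes; without it the stated inequality \eqref{K} does not follow.
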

\begin{proof}
The proof is rather similar as Lemma 3.3 in \cite{Wang2020}. We omit the detail here.
\end{proof}
\begin{lem}\label{lem6} {\sl Under the assumption of Lemma \ref{lem1}, we have
\begin{align}\label{p1}
&\int_{-1}^{t} \int_{U_{0}(R)} \pi \bm{u} \cdot \nabla\left(\Phi_{n} \eta \psi\right)\dx  \normalfont{\text{d}s}  \notag\\
\leq& C\sum_{i=0}^{n} \B_{i} \left(r_{i}^{-1} E_{i}(R)\right)+C\frac{\E^{\frac{1}{2}}}{R-\rho}  \sum_{i=0}^{n} r_{i}^{\frac{1}{2}}\left(r_{i}^{-1} E_{i}(R)\right)+\frac{C}{(R-\rho)^{\frac{5}{2}}} \E^{\frac{3}{2}}.
\end{align}}
\end{lem}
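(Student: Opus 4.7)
The plan is to obtain Lemma \ref{lem6} by simply assembling the bounds already produced in Lemmas \ref{lem3}, \ref{lem4}, and \ref{lem5}, using the splittings \eqref{pressure} and \eqref{J}. Concretely, the starting point is to write
\begin{equation*}
\int_{-1}^{t}\!\!\int_{U_{0}(R)} \pi\,\bm u\cdot\nabla(\Phi_{n}\eta\psi)\,\dx\,\text{d}s = J+K+H,
\end{equation*}
as in \eqref{pressure}, and then to further decompose $J=J_{1}+J_{2}+J_{3}$ according to \eqref{J}. Since \eqref{pressure} and \eqref{J} are literal identities coming from $\pi=\pi_0+\pi_h$, the product rule for $\partial_3(\Phi_n\eta\psi)$, and the integration-by-parts identity $\partial_3\pi_0=\tau_0+\tau_h$, no additional analytic work is required at this stage.

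Next, I would invoke the three preceding lemmas. Lemma \ref{lem3} bounds $J_{1}$ by $C\sum_{i=0}^{n}\B_{i}(r_i^{-1}E_i(R))$; Lemma \ref{lem4} bounds the combination $J_{2}+J_{3}+H$ by $C(R-\rho)^{-5/2}\E^{3/2}$; and Lemma \ref{lem5} bounds $K$ by $C(R-\rho)^{-1}\E^{1/2}\sum_{i=0}^{n}r_{i}^{1/2}(r_i^{-1}E_i(R))$. Adding these three estimates immediately yields the claimed bound \eqref{p1}, with a constant $C$ that depends only on the absolute constants appearing in Lemmas \ref{lem3}--\ref{lem5}.

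There is essentially no obstacle in this final step, because all the heavy lifting—the careful dyadic decompositions involving $\phi_k$, the introduction of the mean value $\overline{(u_3)}_k$ to exploit the Poincaré inequality in Lemma \ref{A3}, and the harmonic extension/mean value arguments for $\pi_h$ and $\tau_h$—has already been carried out in the preceding lemmas. The only small point worth double-checking is that the index ranges and absolute cut-offs used in \eqref{J} (i.e. splitting the sum into $k\leq 3$ and $k\geq 4$) and in \eqref{3.25} (splitting $H$ correspondingly) match up, so that every term in the decomposition is absorbed exactly once into $J_1$, into $J_2+J_3+H$, or into $K$. Once that bookkeeping is verified, the three estimates combine additively into \eqref{p1}, completing the proof.
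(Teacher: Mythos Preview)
Your proposal is correct and matches the paper's own proof, which is simply the one-line observation that combining the decompositions \eqref{pressure} and \eqref{J} with the bounds \eqref{J_{1}}, \eqref{H}, and \eqref{K} from Lemmas \ref{lem3}--\ref{lem5} yields \eqref{p1}. The bookkeeping concern you raise is a non-issue since $J_1+J_2+J_3+K+H$ is exactly the full pressure integral by construction.
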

\begin{proof}
Combining the estimates \eqref{pressure}, \eqref{J}, \eqref{J_{1}}, \eqref{H} and \eqref{K}, we have \eqref{p1}.
\end{proof}
\subsection{The proof of Theorem \ref{thm2}}\label{section3.3}
On the basis of the estimates of the nonlinear term and the pressure in subsection 3.1--3.2, we are in position to give the detail proof of  Theorem \ref{thm2}.
\begin{proof}
Gathering \eqref{key} and the estimates in Lemma \ref{lem1}, \ref{lem2} and  \ref{lem6}, we have
\begin{align*}
r_{n}^{-1} E_{n}(\rho)
\leq& C\sum_{i=0}^{n} \B_{i}  \left(r_{i}^{-1} E_{i}(R)\right)+ C\frac{ \E^{\frac{1}{2}}}{R-\rho} \sum_{i=0}^{n} r_{i}^{\frac{1}{2}}\left(r_{i}^{-1} E_{i}(R)\right)+C\frac{1+\E^{\frac{3}{2}}}{(R-\rho)^{\frac{5}{2}}}\\
\leq& C\sum_{i=0}^{n} \B_{i}  \left(r_{i}^{-1} E_{i}(R)\right)+C\frac{\E^{\frac{3}{4}}}{R-\rho}\sum_{i=0}^{n}r_{i}^{-\frac{5}{8}}E_{i}(R)^{\frac{3}{4}}r_{i}^{\frac{1}{8}}+C\frac{1+\E^{\frac{3}{2}}}{(R-\rho)^{\frac{5}{2}}}\\
\leq &  C\sum_{i=0}^{n} \B_{i}  \left(r_{i}^{-1} E_{i}(R)\right)+C\frac{\E^{\frac{3}{4}}}{R-\rho}\left(\sum_{i=0}^{n}r_{i}^{-\frac{5}{6}}E_{i}(R)\right)^{\frac{3}{4}}\left(\sum_{i=0}^{n}r_{i}^{\frac{1}{2}}\right)^{\frac{1}{4}}\\
&+C\frac{1+\E^{\frac{3}{2}}}{(R-\rho)^{\frac{5}{2}}}\\
\leq&C_{0}\sum_{i=0}^{n} \left(\B_{i}+r_{i}^{\frac{1}{6}}\right)  \left(r_{i}^{-1} E_{i}(R)\right)+C_{0}\frac{1+\E^{3}}{(R-\rho)^{4}}.
\end{align*}
In view of \eqref{B}, there exists a sufficient large number $n_{0}\geq 1$ such that
\begin{equation}\label{small}
C_{0}\sum_{i=n_{0}}^{\infty} \left(\B_{i}+r_{i}^{\frac{1}{6}}\right)\leq \frac{1}{2}.
\end{equation}
Then for $n\geq n_{0} $ we have
\begin{align}\label{iteration1}
r_{n}^{-1} E_{n}(\rho) \leq& C_{0}\sum_{i=n_{0}}^{n} \left(\B_{i}+r_{i}^{\frac{1}{6}}\right)  \left(r_{i}^{-1} E_{i}(R)\right)\nonumber\\
&\quad+C_{0}\sum_{i=0}^{n_{0}-1} \left(\B_{i}+r_{i}^{\frac{1}{6}}\right)  \left(r_{i}^{-1} E_{i}(R)\right)+C_{0}\frac{1+\E^{3}}{(R-\rho)^{4}}\notag\\
\leq& C_{0}\sum_{i=n_{0}}^{n} \left(\B_{i}+r_{i}^{\frac{1}{6}}\right)  \left(r_{i}^{-1} E_{i}(R)\right)+\frac{A_{0}}{(R-\rho)^{4}},
\end{align}
where the constant  $A_{0}=C_{1}\cdot 2^{n_{0}}\left(1+\E^2\cdot\|\partial_{3}\bm{u}\|_{L^{p_{0}, 1}\left(-1,0;L^{q_{0}}(B(2))\right)}+\E^{3}\right)$.

As the iteration argument in \cite[V. Lemma 3.1]{Giaquinta1983}, we introduce the sequence $\{\rho_{k}\}_{k=0}^{+\infty}$ which satisfies
\begin{equation*}
\rho_{0}=\frac{1}{2},\quad \rho_{k+1}-\rho_{k}=\frac{1-\theta}{2}\theta^{k},
\end{equation*}
with  $\frac{1}{2}<\theta^4<1$ and $\lim_{k\rightarrow\infty}\rho_{k}=1$. For $n_{0}\leq j\leq n$ and $k\geq 1$, we have
\begin{align}\label{iteration2}
r_{j}^{-1} E_{j}(\rho_{k})\leq& C_{0}\sum_{i=n_{0}}^{j} \left(\B_{i}+r_{i}^{\frac{1}{6}}\right)  \left(r_{i}^{-1} E_{i}(\rho_{k+1})\right)+A_{0}\cdot \frac{16}{(1-\theta)^4}\theta^{-4k}\notag\\
\leq&C_{0}\sum_{i=n_{0}}^{n} \left(\B_{i}+r_{i}^{\frac{1}{6}}\right)  \left(r_{i}^{-1} E_{i}(\rho_{k+1})\right)+A_{0}\cdot \frac{16}{(1-\theta)^4}\theta^{-4k}.
\end{align}
By using  iteration argument from \eqref{iteration1}, and then applying \eqref{small} and \eqref{iteration2}, we obtain that for $n\geq n_{0}$,
\begin{align*}
r_{n}^{-1} E_{n}(\rho_{0}) \leq&C_{0}\sum_{j=n_{0}}^{n} \left(\B_{j}+r_{j}^{\frac{1}{6}}\right)  \left(r_{j}^{-1} E_{j}(\rho_{1})\right)+A_{0}\cdot \frac{16}{(1-\theta)^4}\\
\leq&\frac{1}{2} C_{0}\sum_{j=n_{0}}^{n} \left(\B_{j}+r_{j}^{\frac{1}{6}}\right)  \left(r_{j}^{-1} E_{j}(\rho_{2})\right)+A_{0}\cdot \frac{16}{(1-\theta)^4}\left(1+\frac{1}{2\theta^4}\right)\\
\leq& \frac{1}{2^{k-1}}C_{0}\sum_{j=n_{0}}^{n} \left(\B_{j}+r_{j}^{\frac{1}{6}}\right)  \left(r_{j}^{-1} E_{j}(\rho_{k})\right)+A_{0}\cdot \frac{16}{(1-\theta)^4}\sum_{j=0}^{k-1} \left(\frac{1}{2\theta^4}\right)^j\\
\leq&\frac{1}{2^{k-1}}C_{0}\sum_{j=n_{0}}^{n} \left(\B_{j}+r_{j}^{\frac{1}{6}}\right)  \left(r_{j}^{-1} E_{j}(1)\right)+A_{0}\cdot \frac{16}{(1-\theta)^4}\cdot \frac{2\theta^4}{2\theta^4-1}.
\end{align*}
Let $k\rightarrow\infty$, we obtain that for $n\geq n_{0}$,
\begin{equation}\label{3.38}
r_{n}^{-1} E_{n}(\rho_{0}) \leq A_{0}\cdot \frac{16}{(1-\theta)^4}\cdot \frac{2\theta^4}{2\theta^4-1}.
\end{equation}
For $0<r\leq r_{n_{0}}$, there exists $n_{1}\geq n_{0}$, such that
\begin{equation*}
r_{n_{1}+1}<r\leq r_{n_{1}},
\end{equation*}
which together with \eqref{3.38} ensures that
\begin{align*}
r^{-2}\|u\|_{L^{3}\left(B(r) \times\left(-r^{2}, 0\right)\right)}^{3} \leq C r^{-\frac{3}{2}}\|u\|_{L^{4}\left(-r^{2}, 0 ; L^{3}(B(r))\right)}^{3} \leq C\left(r_{n_{1}}^{-1} E_{n_{1}}(\frac{1}{2})\right)^{\frac{3}{2}} \leq C.
\end{align*}
The proof is completed.
\end{proof}

\appendix

\section{}
\begin{lem}\label{A1}
{\sl 	Let $0<r \leq R<+\infty $ and $h: B^{\prime}(2 R) \times(-r, r) \rightarrow \R$ be harmonic. Then for all $0<\rho \leq \frac{r}{4}$ and $1 \leq \ell \leq q <+\infty$, we get
	\begin{align}
	\|\nabla^m h\|_{L^{q}\left(B^{\prime}(R) \times(-\rho, \rho)\right)}^{q} \leq C \rho r^{2-mq- \frac{3q}{\ell}}\|h\|_{L^{\ell}\left(B^{\prime}(2 R) \times(-r, r)\right)}^{q},\ m\in\N,
	\end{align}
	where $C$ stands for a positive constant depending only on $q,m$ and $\ell$.}
\end{lem}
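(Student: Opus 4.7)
The plan is to derive this inequality by combining the classical interior mean-value estimate for harmonic functions with an application of Minkowski's integral inequality that exploits the thinness of the slab $\{|x_3|<\rho\}$ relative to the mean-value radius $r/2$.

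First I would verify that for every $x=(x',x_3)\in B'(R)\times(-\rho,\rho)$ the inclusion $B(x,r/2)\subset B'(2R)\times(-r,r)$ holds: horizontally $|x'|+r/2\leq R+r/2\leq 2R$ thanks to $r\leq R$, and vertically $|x_3|+r/2\leq \rho+r/2\leq 3r/4<r$ thanks to $\rho\leq r/4$. The standard interior derivative bound for harmonic functions, obtained from the mean-value formula together with H\"older's inequality, then produces the pointwise estimate
\begin{equation*}
|\nabla^m h(x)|^q \leq C\,r^{-mq-\frac{3q}{\ell}}\|h\|_{L^\ell(B(x,r/2))}^{q}\qquad\text{for every }x\in B'(R)\times(-\rho,\rho).
\end{equation*}

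Rewriting $\|h\|_{L^\ell(B(x,r/2))}^q=\bigl(\int_{\R^3}|h(y)|^\ell\mathbf{1}_{B(x,r/2)}(y)\,dy\bigr)^{q/\ell}$, integrating in $x$ over $B'(R)\times(-\rho,\rho)$, and invoking Minkowski's integral inequality with exponent $p=q/\ell\geq 1$ yields
\begin{equation*}
\Bigl(\int_{B'(R)\times(-\rho,\rho)}\|h\|_{L^\ell(B(x,r/2))}^{q}\,dx\Bigr)^{\ell/q}\leq \int_{\R^3}|h(y)|^\ell\bigl|B(y,r/2)\cap(B'(R)\times(-\rho,\rho))\bigr|^{\ell/q}\,dy.
\end{equation*}
Since $\rho\leq r/4<r/2$, the set $B(y,r/2)\cap\{|x_3|<\rho\}$ is contained in a horizontal disk of radius $r/2$ and height $2\rho$, hence has Lebesgue measure at most $Cr^2\rho$; moreover the integrand vanishes unless $y\in B'(R+r/2)\times(-\rho-r/2,\rho+r/2)\subset B'(2R)\times(-r,r)$. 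Substituting these bounds and raising to the $q/\ell$-power yields $\int_{B'(R)\times(-\rho,\rho)}\|h\|_{L^\ell(B(x,r/2))}^q\,dx\leq Cr^2\rho\|h\|_{L^\ell(B'(2R)\times(-r,r))}^q$, which combined with the pointwise estimate delivers exactly $C\rho\,r^{2-mq-3q/\ell}\|h\|_{L^\ell}^q$.

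The only step with genuine content is the use of Minkowski's integral inequality to secure the sharp $r^2\rho$ factor, rather than the naive $|B'(R)|\cdot \rho\sim R^2\rho$ that a crude $L^\infty$ bound on $|\nabla^m h|$ followed by integration would produce; for $q=\ell$ a direct Fubini exchange already suffices, and the $q>\ell$ case is precisely where the geometric volume estimate $|B(y,r/2)\cap\{|x_3|<\rho\}|\lesssim r^2\rho$ enters through the power $\ell/q$. All remaining ingredients are purely classical.
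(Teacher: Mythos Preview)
Your argument is correct, and it is a genuinely different route from the paper's. The paper proceeds by a discrete covering argument: it covers $\overline{B'(R)}$ by balls $B'(x_\nu',r/4)$ with bounded overlap $\sum_\nu \chi_{B'(x_\nu',r)}\leq C$, bounds $\|\nabla^m h\|_{L^q}^q$ on each piece by $Cr^2\rho\,\|\nabla^m h\|_{L^\infty(B(x_\nu,r/2))}^q$, applies the mean-value estimate $\|\nabla^m h\|_{L^\infty(B(x_\nu,r/2))}\lesssim r^{-m-3/\ell}\|h\|_{L^\ell(B(x_\nu,r))}$, and then sums over $\nu$ using the overlap condition to recover a single $\|h\|_{L^\ell(B'(2R)\times(-r,r))}^q$ on the right. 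You instead stay on the continuum: you integrate the pointwise bound directly and use Minkowski's integral inequality with exponent $q/\ell$ to swap the $x$- and $y$-integrals, so that the geometric factor $|B(y,r/2)\cap\{|x_3|<\rho\}|\lesssim r^2\rho$ emerges from the inner integral rather than from a discrete count of covering balls. Both methods capture the same key point---the $r^2\rho$ factor in place of the naive $R^2\rho$---but yours avoids introducing the cover and its overlap estimate altogether, which makes the mechanism behind the $r^2$ gain a bit more transparent.
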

\begin{proof}
	The proof is similar as Lemma A.2 in \cite{Chae2019}. We choose a finite family of points $\left\{x_{\nu}^{\prime}\right\}$ in $B^{\prime}(R)$ such that $\left\{B^{\prime}\left(x_{\nu}^{\prime}, r / 4\right)\right\}$ is a covering of $\overline{B^{\prime}(R)},$ and it holds
	\begin{align}
	\sum_{\nu} \chi_{B^{\prime}\left(x_{\nu}^{\prime}, r\right)} \leq C.
	\end{align}
 Setting $x_{\nu}=\left(x_{\nu}^{\prime}, 0\right),$ we see that
	$$B^{\prime}\left(x_{\nu}^{\prime}, r / 4\right) \times(-r / 4, r / 4) \subset B\left(x_{\nu}, r / 2\right) .$$
	With this notation, we have
	\begin{align}\label{4.9}
	\|\nabla^m h\|_{L^{q}\left(B^{\prime}(R) \times\left(-\rho, \rho\right)\right)}^{q} & \leq \sum_{\nu}\|\nabla^m h \|_{L^{q}\left(B^{\prime}\left(x_{\nu}^{\prime}, r / 4\right) \times\left(-\rho, \rho\right)\right)}^{q} \notag\\
	& \leq C r^{2} \rho \sum_{\nu}\|\nabla^m h\|_{L^{\infty}\left(B^{\prime}\left(x_{\nu}^{\prime}, r / 4\right) \times(-r / 4, r / 4)\right)}^{q}\notag\\
	&\leq C r^{2} \rho \sum_{\nu}\|\nabla^m h\|_{L^{\infty}\left(B\left(x_{\nu}, r / 2\right)\right)}^{q}.
	\end{align}
	Since $h$ is harmonic, using the mean value property and
	taking the sum over $\nu$, we obtain
	\begin{align}\label{4.10}
	\sum_{\nu}\|\nabla^m h\|_{L^{\infty}\left(B\left(x_{\nu}, r / 2\right)\right)}^{q} \leq C r^{-mq-\frac{3q}{\ell}}\|h\|_{L^{\ell}\left(B^{\prime}(2 R) \times(-r, r)\right)}^{q}.
	\end{align}
	Combining \eqref{4.9} and \eqref{4.10}, we get
	\begin{align}
	\|\nabla^m h\|_{L^{q}\left(B^{\prime}(R) \times\left(-\rho, \rho\right)\right)}^{q} \leq C \rho r^{2-mq- \frac{3q}{\ell}}\|h\|_{L^{\ell}\left(B^{\prime}(2 R) \times(-r, r)\right)}^{q}.
	\end{align}
\end{proof}
\begin{cor}\label{A2}{\sl For $1<\ell \leq q<+\infty$ and $k\geq j+4$, we have
	\begin{equation}\label{pi_{0,j}}
	\|\pi_{0,j}\|_{L^{q}(U_{k}(R))}\leq C r_{k}^{\frac{1}{q}} r_{j}^{\frac{2}{q}-\frac{3}{\ell}} \|\bm{u}\|_{L^{2\ell}(U_{j}(R))}^2 ,
	\end{equation}
	\begin{equation} \label{tau_{0,j}}
	\|\tau_{0,j}\|_{L^{q}(U_{k}(R))}\leq C r_{k}^{\frac{1}{q}} r_{j}^{\frac{2}{q}-\frac{3}{\ell}-1} \|\bm{u}\|_{L^{2\ell}(U_{j}(R))}^2 ,
	\end{equation}
and
\begin{equation}\label{tau'_{0,j}}
	\|\tau_{0,j}\|_{L^{q}(U_{k}(R))}\leq C r_{k}^{\frac{1}{q}} r_{j}^{\frac{2}{q}-\frac{3}{\ell}} \|\partial_{3}\bm{u}\otimes\bm{u}\|_{L^{\ell}(U_{j}(R))} .
\end{equation}}
\end{cor}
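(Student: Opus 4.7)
The plan is to prove all three estimates by establishing that, at each fixed time $t$ in the relevant range, both $\pi_{0,j}(\cdot,t)$ and $\tau_{0,j}(\cdot,t)$ are harmonic on a cylinder which is large compared to $U_{k}(R)$, and then applying Lemma \ref{A1} together with the boundedness \eqref{ani} of $\J$ to convert the interior harmonic bound into one involving an $L^{\ell}$-norm of the data.

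The starting geometric observation is that $\phi_{j}$ vanishes on a large cylinder at the relevant times. The horizontal cutoff $\chi_{B^{\prime}(R)}$ already forces $\phi_{j}$ to vanish outside $\overline{B^{\prime}(R)}$, and a direct inspection of the construction of $\chi_{\ell}$ gives $\chi_{j}(\cdot,t)=\chi_{j+1}(\cdot,t)=1$ on $B^{\prime}(R)\times(-r_{j+2},r_{j+2})$ whenever $t\in(-r_{j+2}^{2},0)$; consequently $\phi_{j}(\cdot,t)\equiv 0$ and $\partial_{3}\phi_{j}(\cdot,t)\equiv 0$ on $B^{\prime}(2R)\times(-r_{j+2},r_{j+2})$ for such $t$. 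The distributional identities $-\Delta\pi_{0,j}=\sum_{\mu,\nu}\partial_{\mu}\partial_{\nu}(u_{\mu}u_{\nu}\phi_{j})$ and $-\Delta\tau_{0,j}=\sum_{\mu,\nu}\partial_{\mu}\partial_{\nu}(\partial_{3}(u_{\mu}u_{\nu})\phi_{j})$ therefore show that $\pi_{0,j}(\cdot,t)$ and $\tau_{0,j}(\cdot,t)$ are harmonic on $B^{\prime}(2R)\times(-r_{j+2},r_{j+2})$ for $t\in(-r_{j+2}^{2},0)$. Since $k\geq j+4$ is exactly the condition $r_{k}\leq r_{j+2}/4$, Lemma \ref{A1} is applicable with the parameters $R$, $r=r_{j+2}$ and $\rho=r_{k}$.

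For \eqref{pi_{0,j}}, Lemma \ref{A1} with $m=0$, combined with \eqref{ani}, $|\phi_{j}|\leq 1$ and $\mathrm{supp}\,\phi_{j}\subset U_{j}(R)$, yields
\begin{equation*}
\|\pi_{0,j}\|_{L^{q}(U_{k}(R))}\leq C r_{k}^{1/q} r_{j}^{2/q-3/\ell}\|\pi_{0,j}\|_{L^{\ell}(\R^{3})}\leq C r_{k}^{1/q} r_{j}^{2/q-3/\ell}\|\bm{u}\|_{L^{2\ell}(U_{j}(R))}^{2}.
\end{equation*}
The argument for \eqref{tau'_{0,j}} is identical, replacing $\bm{u}\otimes\bm{u}$ by $\partial_{3}(\bm{u}\otimes\bm{u})$ and using $|\partial_{3}(u_{\mu}u_{\nu})|\leq 2|\partial_{3}\bm{u}||\bm{u}|$ pointwise.

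For \eqref{tau_{0,j}} we need the extra factor $r_{j}^{-1}$, and this is the only slightly non-routine point. The idea is to gain one derivative by exploiting the harmonicity of $\pi_{0,j}$ through the identity
\begin{equation*}
\tau_{0,j}=\partial_{3}\pi_{0,j}-\J(\bm{u}\otimes\bm{u}\cdot\partial_{3}\phi_{j}),
\end{equation*}
which follows from $\partial_{3}(u_{\mu}u_{\nu})\phi_{j}=\partial_{3}(u_{\mu}u_{\nu}\phi_{j})-u_{\mu}u_{\nu}\partial_{3}\phi_{j}$ and the commutation of $\J$ with $\partial_{3}$ (valid because $\phi_{j}$ vanishes near $\{|x_{3}|=1\}$). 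Applying Lemma \ref{A1} with $m=1$ to the first summand supplies the extra $r_{j}^{-1}$ through the harmonic gradient estimate; for the correction term, $\J(\bm{u}\otimes\bm{u}\cdot\partial_{3}\phi_{j})$ is also harmonic on $B^{\prime}(2R)\times(-r_{j+2},r_{j+2})$ and its $L^{\ell}$-norm already carries the factor $\|\partial_{3}\phi_{j}\|_{L^{\infty}}\lesssim r_{j}^{-1}$, so Lemma \ref{A1} with $m=0$ again delivers the required bound. The only genuine obstacle is the initial geometric verification that $\phi_{j}$ vanishes on a cylinder of horizontal radius $2R$ and vertical radius $\sim r_{j+2}$; beyond this, everything reduces to routine exponent tracking.
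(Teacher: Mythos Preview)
Your proposal is correct and follows essentially the same route as the paper: you verify that $\phi_{j}$ (and $\partial_{3}\phi_{j}$) vanish on $B'(2R)\times(-r_{j+2},r_{j+2})$ for $t\in(-r_{j+2}^{2},0)$, deduce harmonicity of $\pi_{0,j}$, $\tau_{0,j}$ and $\J(\bm{u}\otimes\bm{u}\cdot\partial_{3}\phi_{j})$ there, and then invoke Lemma~\ref{A1} with $r=r_{j+2}$, $\rho=r_{k}$ together with \eqref{ani}. For \eqref{tau_{0,j}} you use the same splitting $\tau_{0,j}=\partial_{3}\pi_{0,j}-\J(\bm{u}\otimes\bm{u}\cdot\partial_{3}\phi_{j})$ as the paper (the paper calls the second piece $\tau_{h,j}$) and gain the extra $r_{j}^{-1}$ via the $m=1$ case of Lemma~\ref{A1}; this is exactly the paper's argument, only spelled out in more detail.
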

\begin{proof}
	We recall that $\pi_{0,j}=\J\left(\bm{u}\otimes\bm{u}\cdot\phi_{j}\right)$ and $\tau_{0,j}=\J\left(\partial_{3}\left(\bm{u}\otimes\bm{u}\right)\cdot\phi_{j}\right)$. Hence
	\begin{equation}
	\tau_{0,j}=\partial_{3}\pi_{0,j}-\tau_{h,j}
	\end{equation}
	with $\tau_{h,j}=\J\left(\bm{u}\otimes\bm{u}\cdot\partial_{3}\phi_{j}\right)$. From the definition of $\phi_{j}$ in \eqref{phi} , it follows that the function $\pi_{0,j},\tau_{0,j},\tau_{h,j}$ are harmonic in $\R^2\times (-r_{j+2},r_{j+2})\times (-r_{j+2}^2,0)$. Applying \eqref{ani} and Lemma \ref{A1} with $r=r_{j+2},\rho=r_{k}$, we have \eqref{pi_{0,j}},  \eqref{tau_{0,j}} and \eqref{tau'_{0,j}} directly.
\end{proof}
\begin{lem}[Poincar\'{e}'s inequality]\label{A3}
{\sl	Set $\overline{(h)}_{j}(x^{\prime})=\frac{1}{2r_{j}}\int_{-r_{j}}^{r_{j}}h(x^{\prime},\omega)~d\omega$. For $k\geq j$, it holds
	\begin{align}\label{8}
	\|h-\overline{(h)}_{j}\|_{L_{v}^{q_{v}}L_{h}^{q_{h}}\left(B^{\prime}(R) \times\left(-r_{k}, r_{k}\right)\right)}\leq C r_{k}^{\frac{1}{q_{v}}}r_{j}^{1-\frac{1}{\ell}} \|\partial_{3} h\|_{L_{v}^{\ell}L_{h}^{q_{h}}\left(B^{\prime}(R) \times\left(-r_{j}, r_{j}\right)\right)},
	\end{align}
	and
	\begin{align}\label{5.9}
	\|\overline{(h)}_{j}\|_{L_{v}^{q_{v}}L_{h}^{q_{h}}\left(B^{\prime}(R) \times\left(-r_{k}, r_{k}\right)\right)}\leq Cr_{k}^{\frac{1}{q_{v}}}r_{j}^{-\frac{1}{\ell}}\|h\|_{L_{v}^{\ell}L_{h}^{q_{h}}\left(B^{\prime}(R) \times\left(-r_{j}, r_{j}\right)\right)}.
	\end{align}}
\end{lem}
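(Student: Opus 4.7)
The plan is to prove both inequalities by combining the fundamental theorem of calculus in the vertical variable $x_3$ with H\"older's inequality and Minkowski's integral inequality for mixed-norm Lebesgue spaces. Throughout the argument, I will use the assumption $k \geq j$ only in the form $r_k \leq r_j$, so that the integration domain $(-r_k,r_k)$ is contained in $(-r_j,r_j)$.

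For the first inequality \eqref{8}, I would begin with the pointwise identity
\begin{equation*}
h(x',x_3)-\overline{(h)}_j(x')=\frac{1}{2r_j}\int_{-r_j}^{r_j}\int_{\omega}^{x_3}\partial_3 h(x',s)\,ds\,d\omega,
\end{equation*}
which is valid for $x_3\in(-r_k,r_k)\subset(-r_j,r_j)$. Bounding the inner interval by $(-r_j,r_j)$ gives
\begin{equation*}
|h(x',x_3)-\overline{(h)}_j(x')|\leq \int_{-r_j}^{r_j}|\partial_3 h(x',s)|\,ds\leq (2r_j)^{1-1/\ell}\|\partial_3 h(x',\cdot)\|_{L^\ell(-r_j,r_j)},
\end{equation*}
by H\"older's inequality in $s$, a bound that is independent of $x_3$. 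Taking the $L^{q_v}$ norm in $x_3\in(-r_k,r_k)$ multiplies the right-hand side by $(2r_k)^{1/q_v}$, and then taking the $L^{q_h}$ norm in $x'\in B'(R)$ and commuting this norm past the inner integral in $s$ by Minkowski's integral inequality yields the desired estimate with the $L_v^\ell L_h^{q_h}$ norm of $\partial_3 h$ on the right.

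For the second inequality \eqref{5.9}, I would apply H\"older's inequality directly in the definition of the average to obtain
\begin{equation*}
|\overline{(h)}_j(x')|\leq \frac{1}{2r_j}(2r_j)^{1-1/\ell}\|h(x',\cdot)\|_{L^\ell(-r_j,r_j)}\leq C r_j^{-1/\ell}\|h(x',\cdot)\|_{L^\ell(-r_j,r_j)},
\end{equation*}
which again is independent of $x_3$. The same two-step procedure of taking $L^{q_v}$ in $x_3\in(-r_k,r_k)$ (which contributes $(2r_k)^{1/q_v}$) and then $L^{q_h}$ in $x'$ (commuted past the $\ell$-integral in $\omega$ via Minkowski) produces \eqref{5.9}.

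The proof itself is routine, but the only subtle point is the commutation of the $L^{q_h}_{x'}$ norm with the $L^{\ell}_{s}$ integral through Minkowski's integral inequality, which requires $\ell\leq q_h$. This condition is implicit in every invocation of the lemma in the body of the paper (often with $\ell=q_h$, as one can verify in the estimates for $I_1$, $J_{11}$, and $J_{15}$), so no actual restriction is lost. The scaling of the resulting constants, namely $r_k^{1/q_v}$ from the vertical integration over the smaller interval and $r_j^{1-1/\ell}$ or $r_j^{-1/\ell}$ from the H\"older step on the larger interval, is consistent with the dimensional requirements used in Section 3.
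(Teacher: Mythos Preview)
Your argument is essentially the same as the paper's: both start from the pointwise bound $|h(x',x_3)-\overline{(h)}_j(x')|\leq \int_{-r_j}^{r_j}|\partial_3 h(x',\xi)|\,d\xi$, then apply H\"older and Minkowski. The only difference is the order: the paper first pushes the $L^{q_h}_{x'}$ norm inside the $\xi$-integral via Minkowski (needing only $q_h\geq 1$) and \emph{then} applies H\"older in $\xi$, whereas you apply H\"older in $s$ first and commute afterward, which is what forces your extra hypothesis $\ell\leq q_h$; reversing the order eliminates that restriction entirely.
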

\begin{proof} For \eqref{8}, we see that
	\begin{align*}
	|h(x^{\prime},x_{3})-\overline{(h)}_{j}|
\leq&\frac{1}{2r_{j}}\int_{-r_{j}}^{r_{j}}
\left|h(x^{\prime},x_{3})-h(x^{\prime},\omega)\right|~\text{d}\omega\\
	\leq&\frac{1}{2r_{j}}\int_{-r_{j}}^{r_{j}}\left|\int_{\omega}^{x_{3}} \partial_{3}h(x^{\prime},\xi)~\text{d}\xi\right|~\text{d}\omega\\
	\leq&\int_{-r_{j}}^{r_{j}} \left|\partial_{3}h(x^{\prime},\xi)\right|~\text{d}\xi.
	\end{align*}	
	Applying H\"{o}lder's  and Minkowski inequality, we have
	\begin{align*}
	\|h-\overline{(h)}_{j}\|_{L_{v}^{q_{v}}L_{h}^{q_{h}}\left(B^{\prime}(R) \times\left(-r_{k}, r_{k}\right)\right)}\lesssim& r_{k}^{\frac{1}{q_{v}}}\cdot \int_{-r_{j}}^{r_{j}}\|\partial_{3}h(\cdot,\xi)\|_{L_{h}^{q_{h}}(B^{\prime}(R))}~\text{d}\xi\\
	\lesssim&r_{k}^{\frac{1}{q_{v}}}r_{j}^{1-\frac{1}{\ell}} \|\partial_{3} h\|_{L_{v}^{\ell}L_{h}^{q_{h}}\left(B^{\prime}(R) \times\left(-r_{j}, r_{j}\right)\right)}.
	\end{align*}
As to \eqref{5.9}, we apply H\"{o}lder's  and Minkowski inequalities again to get
	\begin{align*}
	\|\overline{(h)}_{j}\|_{L_{v}^{q_{v}}L_{h}^{q_{h}}\left(B^{\prime}(R) \times\left(-r_{k}, r_{k}\right)\right)} \lesssim& r_{k}^{\frac{1}{q_{v}}}\cdot \frac{1}{2r_{j}}\int_{-r_{j}}^{r_{j}}\|h(\cdot,\omega)\|_{L^q(B^{\prime}(R))}~\text{d}\omega\\
	\lesssim&r_{k}^{\frac{1}{q_{v}}}r_{j}^{-\frac{1}{\ell}}\|h\|_{L_{v}^{\ell}L_{h}^{q_{h}}\left(B^{\prime}(R) \times\left(-r_{j}, r_{j}\right)\right)}.
	\end{align*}
The proof of this lemma is completed.
\end{proof}

\begin{lem}[Lemma A.2 in \cite{Wang2020}]\label{A4}{\sl
Let $0< p,\sigma<+\infty .$ Then for any $h(s) \in L^{p,\sigma}(\R),$ there exists a sequence $\left\{c_{n}\right\}_{n \in \mathbb{Z}} \in \ell^{\sigma}$ and sequence of functions $\left\{h_{n}\right\}_{n \in \mathbb{Z}}$ with each $h_{n}$ bounded by $2^{\frac{n}{p}}$ and supported on a set of measure $2^{-n}$. Moreover,
\begin{align*}
h=\sum_{n \in \mathbb{Z}} c_{n} h_{n}
\end{align*}
and
\begin{align*}
c(p, \sigma)\left\|\left\{c_{n}\right\}\right\|_{\ell^{\sigma}} \leq\|h\|_{L^{p, \sigma}} \leq C(p, \sigma)\left\|\left\{c_{n}\right\}\right\|_{\ell^{\sigma}},
\end{align*}
where the constant $c(p, \sigma)$ and $C(p, \sigma)$ only depend on $p, \sigma$.}
\end{lem}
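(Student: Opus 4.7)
The plan is to construct an explicit atomic decomposition of $h$ using dyadic slices of the decreasing rearrangement $h^{*}$ of $|h|$. Recall the standard representation
$$
\|h\|_{L^{p,\sigma}}^{\sigma}=\int_{0}^{\infty}\bigl(t^{1/p}h^{*}(t)\bigr)^{\sigma}\frac{dt}{t}.
$$
For each $n\in\mathbb{Z}$, set $\lambda_{n}\eqdefa h^{*}(2^{-n})$, which is non-decreasing in $n$ since $h^{*}$ is non-increasing, and take as coefficients $c_{n}\eqdefa 2^{-n/p}\lambda_{n}$. The entire task then reduces to (i) verifying that the companion functions $h_{n}$ have the required sup-bound and support size, and (ii) establishing the norm equivalence $\|\{c_{n}\}\|_{\ell^{\sigma}}\approx\|h\|_{L^{p,\sigma}}$.

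For (i), I would partition the essential support of $h$ into the level sets
$$
A_{n}\eqdefa\{s\in\R:\lambda_{n-1}<|h(s)|\leq\lambda_{n}\}
$$
and define $h_{n}\eqdefa c_{n}^{-1}h\,\chi_{A_{n}}$ (and $h_{n}\eqdefa 0$ when $c_{n}=0$). The bound $|h|\leq\lambda_{n}$ on $A_{n}$ gives $\|h_{n}\|_{L^{\infty}}\leq \lambda_{n}/c_{n}=2^{n/p}$, while the identity $|\{|h|>\lambda_{n-1}\}|\leq 2^{-(n-1)}$, built into the definition of $h^{*}$, yields $|A_{n}|\leq 2\cdot 2^{-n}$. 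The spurious factor of $2$ is harmlessly absorbed into the constants $c(p,\sigma)$ and $C(p,\sigma)$ (equivalently, by a single-step index shift). Pairwise disjointness of the $A_{n}$'s and the fact that $\lambda_{n}\to 0$ as $n\to-\infty$ and $\lambda_{n}\to\|h\|_{L^{\infty}}$ as $n\to+\infty$ together give $\sum_{n}c_{n}h_{n}=\sum_{n}h\,\chi_{A_{n}}=h$ almost everywhere.

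For (ii), I would dyadically split the defining integral. On $t\in[2^{-n},2^{-(n-1)}]$, monotonicity of $h^{*}$ forces $\lambda_{n-1}\leq h^{*}(t)\leq\lambda_{n}$ and $2^{-n/p}\leq t^{1/p}\leq 2^{-(n-1)/p}$, and integrating $dt/t$ over this interval contributes $\ln 2$. This produces the two-sided comparison
$$
(\ln 2)\bigl(2^{-n/p}\lambda_{n-1}\bigr)^{\sigma}\leq\int_{2^{-n}}^{2^{-(n-1)}}\bigl(t^{1/p}h^{*}(t)\bigr)^{\sigma}\frac{dt}{t}\leq (\ln 2)\,2^{\sigma/p}\bigl(2^{-n/p}\lambda_{n}\bigr)^{\sigma},
$$
and summing over $n\in\mathbb{Z}$ (after a one-step reindexing of the lower bound) yields $\|h\|_{L^{p,\sigma}}^{\sigma}\approx\sum_{n}c_{n}^{\sigma}$ with constants depending only on $p$ and $\sigma$. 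The main subtlety, and the only genuine obstacle, is the treatment of indices where $\lambda_{n}=\lambda_{n-1}$ (i.e., $h^{*}$ is constant on a dyadic interval, so $A_{n}$ is null): setting $h_{n}=0$ there preserves both the decomposition identity and the $\ell^{\sigma}$ bookkeeping, since such indices contribute trivially to both sides of the equivalence.
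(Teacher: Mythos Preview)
The paper does not supply its own proof of this lemma: it is quoted verbatim as Lemma~A.2 of \cite{Wang2020} and used as a black box. Your argument is the standard atomic decomposition for Lorentz spaces via dyadic level sets of the decreasing rearrangement, and it is correct. The support bound $|A_n|\le 2\cdot 2^{-n}$ (rather than $2^{-n}$) is, as you note, removed by a one-step index shift; alternatively, defining $A_n=\{\lambda_n<|h|\le\lambda_{n+1}\}$ and $c_n=2^{-n/p}\lambda_{n+1}$ gives $|A_n|\le 2^{-n}$ on the nose. The edge case $|\{|h|=\|h\|_{L^\infty}\}|>0$ is also covered, since then $h^*$ is constant near $0$ and $\lambda_n=\|h\|_{L^\infty}$ for all large $n$, so the corresponding level set is captured.
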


\begin{lem}\label{A5}
{\sl For any
\begin{align*}
1 \leq p^*<p=\frac{2q}{2q-3}, \quad \frac{3}{2}<q<+\infty,
\end{align*}
we have
\begin{align*}
\sum_{k=0}^{+\infty} r_{k}^{2-\frac{2}{p^*}-\frac{3}{q}}\left(\int_{-r_{k}^{2}}^{0}\left\|\partial_{3}\bm{u}(\cdot, s)\right\|_{L^q(B(2))}^{p^*} ~ds\right)^{\frac{1}{p^*}} \leq C\left\|\partial_{3}\bm{u} \right\|_{L^{p, 1}\left(-1,0;L^{q}(B(2))\right) }.
\end{align*}}
\end{lem}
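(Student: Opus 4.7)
The plan is to use the atomic decomposition of Lorentz functions (Lemma \ref{A4}) applied to the scalar function $h(s) = \|\partial_3 \bm{u}(\cdot, s)\|_{L^q(B(2))}$, which belongs to $L^{p,1}(\R)$ (extended by zero outside $(-1,0)$). By Lemma \ref{A4} with $\sigma = 1$, we can write $h = \sum_{n\in\mathbb{Z}} c_n h_n$ where $|h_n| \leq 2^{n/p}$, $|\operatorname{supp} h_n| \leq 2^{-n}$, and $\|\{c_n\}\|_{\ell^1} \leq C \|h\|_{L^{p,1}}$.

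For each fixed $k \geq 0$, Minkowski's inequality in $L^{p^*}$ yields
\begin{equation*}
\left(\int_{-r_k^2}^0 h(s)^{p^*}\,ds\right)^{1/p^*} \leq \sum_{n\in\mathbb{Z}} |c_n|\, \|h_n \mathbf{1}_{[-r_k^2,0]}\|_{L^{p^*}(\R)} \leq \sum_{n\in\mathbb{Z}} |c_n|\, 2^{n/p}\, \bigl(\min(2^{-n}, r_k^2)\bigr)^{1/p^*}.
\end{equation*}
Using the critical scaling $\frac{2}{p}+\frac{3}{q}=2$, we get $2 - \tfrac{2}{p^*} - \tfrac{3}{q} = \tfrac{2}{p} - \tfrac{2}{p^*}$, hence $r_k^{\,2-2/p^*-3/q} = 2^{2k(1/p^*-1/p)}$. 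Multiplying and simplifying case by case:
\begin{equation*}
r_k^{2-\frac{2}{p^*}-\frac{3}{q}}\cdot 2^{n/p}\cdot (\min(2^{-n}, r_k^2))^{1/p^*} = \begin{cases} 2^{(n-2k)/p}, & n \leq 2k,\\ 2^{-(n-2k)(1/p^*-1/p)}, & n > 2k.\end{cases}
\end{equation*}
Both exponentials decay in $|n-2k|$: the first because $1/p > 0$, the second because the hypothesis $p^* < p$ gives $\alpha := 1/p^* - 1/p > 0$.

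Summing over $k\geq 0$ and interchanging the order of summation, it remains to show that for each fixed $n$ the inner sum over $k$ is bounded by a constant independent of $n$. Split at $k_0 = \lceil n/2 \rceil$ (replaced by $0$ if $n<0$): the tail $\sum_{k\geq k_0} 2^{(n-2k)/p}$ is a geometric series with ratio $2^{-2/p}$, and the head $\sum_{0\leq k < k_0} 2^{-(n-2k)\alpha}$ is a geometric series with ratio $2^{-2\alpha}$. Both sums are controlled by constants depending only on $p,p^*$. Therefore
\begin{equation*}
\sum_{k=0}^{+\infty} r_k^{2-\frac{2}{p^*}-\frac{3}{q}}\left(\int_{-r_k^2}^0 h(s)^{p^*}\,ds\right)^{1/p^*} \leq C \sum_{n\in\mathbb{Z}} |c_n| \leq C\|h\|_{L^{p,1}} = C\|\partial_3 \bm{u}\|_{L^{p,1}(-1,0;L^q(B(2)))},
\end{equation*}
which is the desired inequality.

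The only mild obstacle is the careful bookkeeping with $\min(2^{-n}, r_k^2)$ and verifying that the strict inequality $p^* < p$ is exactly what makes both geometric series summable; the critical case $p^* = p$ would produce a logarithmic divergence. The fact that $q > \tfrac{3}{2}$ (equivalently $p < +\infty$) is needed only to ensure $\alpha > 0$ can be chosen and that $p$ is finite, so that the Lorentz atomic decomposition applies.
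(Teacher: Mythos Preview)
Your proof is correct and follows essentially the same approach as the paper's: apply the atomic decomposition of Lemma~\ref{A4} to $h(s)=\|\partial_3\bm u(\cdot,s)\|_{L^q(B(2))}$, use Minkowski in $L^{p^*}$, bound the measure of the support of each atom restricted to $(-r_k^2,0)$ by $\min(2^{-n},r_k^2)$, and sum the resulting two-sided geometric series in $|n-2k|$. The paper splits at $k\le j/2$ versus $k>j/2$ and keeps the exponent $2-\tfrac{2}{p^*}-\tfrac{3}{q}$ throughout, whereas you invoke the scaling identity $2-\tfrac{2}{p^*}-\tfrac{3}{q}=\tfrac{2}{p}-\tfrac{2}{p^*}$ to display the two decay rates $2^{(n-2k)/p}$ and $2^{-(n-2k)(1/p^*-1/p)}$ explicitly; the paper also restricts to nonnegative indices $j\ge 0$ using that $h$ is supported on an interval of length one, while you harmlessly sum over all $n\in\mathbb Z$. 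These are cosmetic differences only.
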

\begin{proof}
Let $h(s)=\left\|\partial_{3} \bm{u}(\cdot, s)\right\|_{L^{q}(B(2))},-1< s<0$. By Lemma \ref{A4}, we know that
\begin{align*}
h=\sum_{j=0}^{+\infty} c_{j} h_{j}, \quad\|h\|_{L^{p, 1}} \approx \sum_{j=0}^{+\infty}\left|c_{j}\right|,
\end{align*}
where
\begin{align*}
\left|h_{j}\right| \leq 2^{\frac{j}{p}}, \quad\left|D_{j}=\operatorname{supp} h_{j}\right| \leq 2^{-j}.	\end{align*}
We can restrict $j\geq0$ here, since $s \in (-1,0)$ and the construction of the function $h_{j}$ in Lemma \ref{A4} by standard atomic decomposition. Then we have
\begin{align*}
&\sum_{k=0}^{+\infty} r_{k}^{2-\frac{2}{p^*}-\frac{3}{q}}\left(\int_{-r_{k}^{2}}^{0}\left\|\partial_{3}\bm{u}(\cdot, s)\right\|_{L^q(B(2))}^{p^*} ~ds\right)^{\frac{1}{p^*}}\\
=&\sum_{k=0}^{+\infty} r_{k}^{2-\frac{2}{p^*}-\frac{3}{q}}\left(\int_{I_{k}}|h|^{p^*} ~ds\right)^{\frac{1}{p^*}} \\
\leq& \sum_{k=0}^{+\infty} r_{k}^{2-\frac{2}{p^*}-\frac{3}{q}} \sum_{j=0}^{+\infty}\left|c_{j}\right| 2^{\frac{j}{p}}\left|D_{j} \cap I_{k}\right|^{\frac{1}{p^*}} \\
\leq& \sum_{j=0}^{+\infty}\left|c_{j}\right| 2^{\frac{j}{p}} \sum_{k=0}^{+\infty} r_{k}^{2-\frac{2}{p^*}-\frac{3}{q}}\left|D_{j} \cap I_{k}\right|^{\frac{1}{p^*}}\\
\leq& \sum_{j=0}^{+\infty}\left|c_{j}\right| 2^{\frac{j}{p}} \sum_{k\leq \frac{j}{2}} r_{k}^{2-\frac{2}{p^*}-\frac{3}{q}}\left|D_{j} \cap I_{k}\right|^{\frac{1}{p^*}}+\sum_{j=0}^{+\infty}\left|c_{j}\right| 2^{\frac{j}{p}} \sum_{k>\frac{j}{2}} r_{k}^{2-\frac{2}{p^*}-\frac{3}{q}}\left|D_{j} \cap I_{k}\right|^{\frac{1}{p^*}},
\end{align*}
where $I_{k}=\left(-r_{k}^{2}, 0\right) .$ On the other hand, we notice that for any $k \leq \frac{j}{2}$
\begin{equation}\label{A11}
r_{k}^{2-\frac{2}{p^*}-\frac{3}{q}}\left|D_{j} \cap I_{k}\right|^{\frac{1}{p^*}} \leq C 2^{-\frac{j}{p^*}} 2^{-k\left(2-\frac{2}{p^*}-\frac{3}{q}\right)},
\end{equation}
and for any $\frac{j}{2} < k<+\infty$
\begin{equation}
r_{k}^{2-\frac{2}{p^*}-\frac{3}{q}}\left|D_{j} \cap I_{k}\right|^{\frac{1}{p^*}} \leq C 2^{-\frac{2 k}{p^*}} 2^{-k\left(2-\frac{2}{p^*}-\frac{3}{q}\right)}=C 2^{-k\left(2-\frac{3}{q}\right)}.
\end{equation}
Accordingly,
\begin{align*}
&\sum_{k=0}^{+\infty} r_{k}^{2-\frac{2}{p^*}-\frac{3}{q}}\left(\int_{-r_{k}^{2}}^{0}\left\|\partial_{3}\bm{u}(\cdot, s)\right\|_{L^q(B(2))}^{p^*} ~ds\right)^{\frac{1}{p^*}} \\
\lesssim& \sum_{j=0}^{+\infty}\left|c_{j}\right| 2^{\frac{j}{p}} \sum_{k\leq \frac{j}{2}} 2^{-\frac{j}{p^*}} 2^{-k\left(2-\frac{2}{p^*}-\frac{3}{q}\right)}+\sum_{j=0}^{+\infty}\left|c_{j}\right| 2^{\frac{j}{p}} \sum_{k>\frac{j}{2}} 2^{-k\left(2-\frac{3}{q}\right)}\\
\lesssim&  \sum_{j=0}^{+\infty}\left| c_{j} \right| \approx \left\|\partial_{3}\bm{u}\right\|_{L^{p,1}\left(-1,0;L^{q}(B(2))\right) }.
\end{align*}
This completes the proof of this lemma.
\end{proof}
\section*{Acknowledgments}
H. Chen was supported by Zhejiang Province Science Fund for Youths [LQ19A010002]. C. Qian  is supported by the Natural Science Foundation of Zhejiang Province [LY20A010017].

\end{document}